\documentclass[11pt]{amsart}

\usepackage{amsmath}
\usepackage{amssymb}
\usepackage{graphicx}

\newtheorem{theorem}{Theorem}[section]

\newtheorem{lemma}[theorem]{Lemma}
\newtheorem{proposition}[theorem]{Proposition}
\theoremstyle{definition}

\newtheorem{remark}[theorem]{Remark}

\numberwithin{equation}{section}


\newcommand{\R}{\mathbb{R}}
\newcommand{\dd}{\, \text{d}}
\newtheorem{hyp}[theorem]{Assumption}
\usepackage[]{algorithm2e}


\title[Numerical methods for mean-field type control problems]{Numerical methods for mean-field type optimal control problems}

\author[L. Pfeiffer]{Laurent Pfeiffer}

\address[L. Pfeiffer]{Institute for Mathematics and Scientific Computing, Karl-Franzens-Universit\"at, Heinrichstra{\ss}e 36, 8010 Graz, Austria}
\email{{\tt laurent.pfeiffer@uni-graz.at}}

\keywords{Mean-field type control, semi-Lagrangian schemes, optimality conditions, gradient methods.}

\subjclass[2010]{90C15, 93E20}


\begin{document}

\begin{abstract}
In this article, two methods for solving mean-field type optimal control problems are proposed and investigated.
The two methods are iterative methods: at each iteration, a Hamilton-Jacobi-Bellman equation is solved, for a terminal condition obtained by linearizing the cost function. The terminal condition is updated by solving a Fokker-Planck equation. The first method can be seen as a gradient method and uses in an essential manner the convexity of the set of probability distributions. A convergence result for this method is provided. The second method incorporates a penalization term and provides feedback controls. We test the methods on four academic examples.
\end{abstract}

\maketitle


\section{Introduction}

\subsection{Contributions of the article}

\paragraph{Context}

This article is devoted to numerical methods for mean-field type optimal control problems. The problems under study are optimal control problems of stochastic differential equations, for which the cost function is a function of the probability distribution of the state variable at the final time. 
Cost functionals depending on a probability distribution are for instance used in risk-averse optimization: in some situations, minimizing the expectation of a random cost may lead to risky control strategies. Minimizing a function of the whole probability distribution enables then the manager to model and to take into account the risk associated with a given strategy due to the dispersion of the cost. We refer to \cite[Chapter 6]{SDR14} for examples of cost functionals in risk-averse optimization.
Mean-field type control problems are closely related to mean-field games, which have received much attention in the last years, after the publication of the seminal papers \cite{LL07} and \cite{HMC06}. For an introduction to this topic, we refer to \cite{Car12} and \cite{BFY13}.
An example is provided in \cite{LST10}, where the developpement of a new technology is modelled. Let us also mention that stochastic problems with a constraint on the probability distribution of the state variable have been recently studied: in \cite{BT13}, the final probability distribution is prescribed, in \cite{Pfe15}, the constraint is simply an expectation constraint.

The numerical methods presented in this paper are based on a resolution of the optimality conditions. When the optimal control process is a regular feedback control, the optimality conditions take the form of a coupled system of a Hamilton-Jacobi-Bellman (HJB) equation and a Fokker-Planck equation. The Fokker-Planck equation describes then the evolution of the probability distribution of the state variable and the HJB equation is an adjoint equation. The optimality conditions can be derived with the standard approach used for Pontryagin's principle with needle perturbations, see \cite[Chapter 4]{BFY13} and \cite[Proposition 3]{LP14}. In a closely related paper of the same author \cite{Pfe15a}, optimality conditions are derived for a formulation of the problem where the feasible control processes are adapted processes with respect to the Brownian motion. In this framework, the optimality conditions are formulated as follows: an optimal control process is also an optimal solution to a standard stochastic optimal control problem, where the terminal condition can be interpreted as a derivative of the cost function. A standard problem refers here to a problem where the cost function is an expectation, which can be solved by dynamic programming.

\paragraph{Description of the methods}

The methods that we propose are iterative methods, consisting in forward and backward passes. The backward pass consists in the resolution of a standard problem, by solving the corresponding HJB equation. The forward pass consists in solving the Fokker-Planck equation associated with the control process obtained in the backward pass. The proposed schemes are similar to those used to solve mean-field game problems, in so far as they are based on backward and forward passes. In the first method, convex combinations of probability measures in the reachable set are used: in this way, the scheme can be seen as a gradient method.
We provide a convergence result for the continuous-time formulation of the scheme. This formulation of the scheme provides a sequence of probability measures. We show that this sequence possesses at least one limit point satisfying the optimality conditions.
The second method looks like the fixed-point approach for mean-field games, however, a penalization term is included into the linearized standard problems to be solved. In this manner, the algorithm provides a sequence of feedback controls.
The forward-backward structure of our algorithm is usual, but the interpretation of the first method as a gradient method is new to our knowledge. The introduction of a penalizing term in the second method seems also to be new.

The state variable is discretized as a controlled Markov chain with a semi-Lagrangian scheme, proposed originally in \cite{CF95}, see also the reference \cite{Kus90} on the approximation of SDEs with Markov chains. The HJB equation can be then easily discretized: it suffices to write the dynamic programming principle associated with the controlled Markov chain. The Fokker-Planck equation is discretized by writing the Chapman-Kolmogorov equation associated with the Markov chain. In this paper, we do not analyze the discretization of the SDE from a theoretical point of view.
The numerical analysis of forward-backward systems has received much attention in the last years and is still an active field of research. In the articles \cite{ACD10,ACCD13}, a proof of convergence of an implicit finite-difference numerical scheme is provided for various mean-field games.
In \cite{CS14,CS14b}, a semi-Lagrangian scheme, similar to the one used in this article, is used for mean-field games. The convergence of the discretized solutions is proved for a state variable of dimension 1.

\paragraph{Structure of the article}

Just below, we provide the main notations of the paper and describe the problem under study.
In section \ref{section2}, we recall the main results of \cite{Pfe15a}, on which the numerical methods are based. These methods are described in section \ref{section3}, where we give a convergence result (Theorem \ref{theoConvergence}). Section \ref{section4} consists in a discussion on different kinds of cost functions which can be considered. The continuity and the differentiability of these functions are investigated. In section \ref{section5}, numerical results are shown on academic examples. The cost function of the first example involves a Wasserstein distance, in order to reach a given probability distribution. The cost functions of the next two examples take into account the standard deviation of the distribution. The last example uses the conditional value at risk.

\subsection{Formulation of the problem and assumptions}

\paragraph{General notations}

The set of probability measures on $\R^n$ is denoted by $\mathcal{P}(\R^n)$. For a function $\phi:\R^n \rightarrow \R$, its integral (if well-defined) with respect to the measure $m \in \mathcal{P}(\R^n)$ is denoted by
\begin{equation*}
\int_{\R^n} \phi(x) \dd m(x) \quad \text{or} \quad
\int_{\R^n} \phi \dd m.
\end{equation*}
Given two measures $m_1$ and $m_2 \in \mathcal{P}(\R^n)$, we denote:
\begin{equation*}
\int_{\R^n} \phi(x) \dd (m_2(x)- m_1(x)) := \int_{\R^n} \! \phi(x) \dd m_2(x) - \int_{\R^n} \! \phi(x) \dd m_1(x).
\end{equation*}

The probability distribution of a given random variable $X$ with values in $\R^n$ is denoted by $\mathcal{L}(X) \in \mathcal{P}(\R^n)$. If $m= \mathcal{L}(X) \in \mathcal{P}(\R^n)$, then for any continuous and bounded function $\phi:\R^n \rightarrow \R$,
\begin{equation*}
\mathbb{E}\big[ \phi(X) \big]= \int_{\R^n} \phi \dd m.
\end{equation*}
We also denote by $\sigma(X)$ the $\sigma$-algebra generated by $X$.

For $p \geq 1$, we denote by $\mathcal{P}_p(\R^n)$ the set of probability measures having a finite $p$-th moment:
\begin{equation*}
\mathcal{P}_p(\R^n) := \Big\{
m \in \mathcal{P}(\R^n) \,\big|\, \int_{\R^n} |x|^p \dd m(x) < + \infty
\Big\}.
\end{equation*}
We equip $\mathcal{P}_p(\R^n)$ with the Wasserstein distance $d_p$, see the definition and the dual representation of $d_p$ for $p=1$ in section \ref{section21}.

For all $R\geq 0$, we define:
\begin{equation} \label{eqDefBpR}
\bar{B}_p(R):= \Big\{ m \in \mathcal{P}_p(\R^n) \,|\, \int_{\R^n} |x|^p \dd m(x) \leq R \Big\}.
\end{equation}

The open (resp.\@ closed) ball of radius $r \geq 0$ and center $0$ is denoted by $B_r$ (resp.\@ $\bar{B}_r$), its complement by $B_r^\text{c}$ (resp.\@ $\bar{B}_r^\text{c}$). The set of real-valued Lipschitz continuous functions with modulus 1 defined on $\R^n$ is denoted by $1-\text{Lip}(\R^n)$.

For a given $p \geq 1$, a function $\phi:\R^n \rightarrow \R^n$ is said to be dominated by $|x|^p$ if for all $\varepsilon>0$, there exists $r>0$ such that for all $x \in B_r^{\text{c}}$,
\begin{equation} \label{eqContPropR}
|\phi(x)| \leq \varepsilon |x|^p.
\end{equation}

\paragraph{Controlled SDEs}

We fix $T>0$ and a standard Brownian motion $(W_t)_{t \in [0,T]}$ of dimension $d$.
For all $t \in [0,T]$, we denote  by $\mathcal{F}_{t}$ the $\sigma$-algebra generated by $(W_s)_{s \in [0,t]}$. 

Let $U$ be a compact subset of $\R^n$. For a given random variable $Y_0$ independent of $\mathcal{F}_{0,T}$ with values in $\R^n$, we denote by $\mathcal{U}_0(Y_0)$ the set of control processes $(u_t)_{t \in [0,T]}$ with values in $U$ which are such that for all $t$, $u_t$ is $(\sigma(Y_0) \times \mathcal{F}_{0,t})$-measurable.

The drift $b:\R^n \times U \rightarrow \R^n$ and the volatility $\sigma:\R^n \times U \rightarrow \R^{n \times d}$ are given. For all $u \in \mathcal{U}_0(Y_0)$, we denote by $\big( X_t^{0,Y_0,u} \big)_{t \in [0,T]}$ the solution to the following SDE:
\begin{equation} \label{eqGenSDE}
\dd X_t^{0,Y_0,u}= b(X_t^{0,Y_0,u},u_t) \dd t + \sigma(X_t^{0,Y_0,u},u_t) \dd W_t, \ \ \forall t \in [0,T], \quad X_0^{0,Y_0,u}= Y_0.
\end{equation}
The well-posedness of this SDE is ensured by the Assumption \ref{hypLipschitzCoeff} \cite[Section 5]{Oks03} below. We also denote by $m_t^{0,Y_0,u}$ the probability distribution of $X_t^{0,Y_0,u}$:
\begin{equation*}
m_t^{0,Y_0,u}= \mathcal{L}(X_t^{0,Y_0,u}).
\end{equation*}

All along the article, we assume that the following assumption holds true.
\begin{hyp} \label{hypLipschitzCoeff} There exists $L>0$ such that for all $x,y \in \R^n$, for all $u, v \in U$,
\begin{align*}
& |b(x,u)| + |\sigma(x,u)| \leq L(1+|x| + |u|), \\
& |b(x,u)-b(y,v)| + |\sigma(x,u)-\sigma(y,v)|  \leq L(|y-x|+ |v-u|).
\end{align*}
\end{hyp}

\paragraph{Formulation of the problem}

We fix an initial condition $Y_0$ (independent of $\mathcal{F}_{0,T}$) and $p \geq 2$ such that $\mathcal{L}(Y_0) \in \mathcal{P}_p(\R^n)$.
Let $\chi$ be a real-valued function defined on $\mathcal{P}_p(\R^n)$. We aim at studying the following problem:
\begin{equation*} \tag{$P$} \label{eqPb}
\inf_{u \in \mathcal{U}_0(Y_0)} \ \chi(m^{0,Y_0,u}_T).
\end{equation*}

Throughout the article, we assume that the next two assumptions (dealing with the continuity and the differentiability of $\chi$) are satisfied.

\begin{hyp} \label{hypContinuity}
The mapping $\chi$ is continuous for the $d_1$-distance.
\end{hyp}

In order to state optimality conditions, we will need a notion of derivative for the mapping $\chi$. There are different ways to define the derivative of $\chi$ and we refer to \cite[Section 6]{Car12} for a discussion on this topic. Denoting by $\mathcal{M}(\R^n)$ the set of finite signed measures on $\R^n$, we define:
\begin{equation*}
\widehat{\mathcal{M}}_p(\R^n)
= \Big\{
m \in \mathcal{M}(\R^n) \,\big|\, \int_{\R^n} |x|^p \dd |m|(x) < + \infty, \ \int_{\R^n} 1 \dd m(x)= 0
\Big\}.
\end{equation*}

\begin{hyp} \label{hypDiff}
The mapping $\chi$ is directionally differentiable in the following sense: for all $m_1$ in $\mathcal{P}_p(\R^n)$, there exists a linear form $D\chi(m_1)$ on $\widehat{\mathcal{M}}_p(\R^n)$ such that for all $m_2$ in $\mathcal{P}(\R^n)$, for all $\theta \in [0,1]$,
\begin{equation} \label{eqDirectionalDerivative}
\chi \big( (1-\theta)m_1+\theta m_2 \big) = \chi(m_1) + \theta D\chi(m_1)(m_2-m_1) + o(\theta).
\end{equation}
Moreover, we assume that the linear form can be identified with a continuous function denoted by $x \in \R^n \mapsto D \chi(m_1,x)$ which is dominated by $|x|^p$.
\end{hyp}

Under Assumption \ref{hypDiff}, equation \eqref{eqDirectionalDerivative} reads:
\begin{equation} \label{eqDirectionalDerivative2}
\chi \big( (1-\theta)m_1+\theta m_2 \big) = \chi(m_1) + \theta \Big[ \int_{\R^n} D\chi(m_1,x) \dd(m_2(x)-m_1(x)) \Big] + o(\theta).
\end{equation}
Observe that at a given value of $m_1$, the directional derivative of $D\chi(m_1,\cdot)$ is uniquely defined ``up to a constant": equation \eqref{eqDirectionalDerivative2} remains true if a constant is added to $D\chi(m_1,\cdot)$. The uniqueness derives directly from the identity:
\begin{equation*}
D\chi(m_1,y)= \lim_{\theta \to 0} \frac{\chi((1-\theta)m_1 + \theta \delta_y)-\chi(m_1)}{\theta} + \int_{\R^n} D\chi(m_1,x) \dd m_1(x),
\end{equation*}
where $\delta_y$ is the Dirac measure centered at $y$.

\section{Analysis of the problem} \label{section2}

\subsection{Technical results} \label{section21}

\paragraph{Convexity of the closure of the reachable set}

We denote by $\mathcal{R}(0,Y_0)$ the set of reachable probability measures at time $T$:
\begin{equation} \label{eqDefReachableSet}
\mathcal{R}(0,Y_0)= \big\{ m_T^{0,Y_0,u} \,|\, u \in \mathcal{U}_0(Y_0) \}.
\end{equation}
We denote by $\text{cl}\big( \mathcal{R}(0,Y_0) \big)$ its closure for the $d_1$-distance. We recall Lemma 6 of \cite{Pfe15a}.

\begin{lemma} \label{lemmaConvexity}
The closure of the set of reachable probability measures 
is convex.
\end{lemma}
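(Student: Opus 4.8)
The plan is to show that $\mathrm{cl}(\mathcal{R}(0,Y_0))$ is convex by proving that $\mathcal{R}(0,Y_0)$ itself is "convex up to closure," i.e.\ that for any two reachable measures $m_T^{0,Y_0,u}$ and $m_T^{0,Y_0,v}$ and any $\theta \in [0,1]$, the convex combination $(1-\theta)m_T^{0,Y_0,u} + \theta m_T^{0,Y_0,v}$ lies in $\mathrm{cl}(\mathcal{R}(0,Y_0))$. Since the closure of a set whose every pairwise convex combination lies in that closure is itself convex, this suffices. The key device is randomization of the control: I would introduce a Bernoulli-type random variable $Z$, independent of everything else, with $\mathbb{P}(Z = 1) = \theta$, and use it to switch between the two controls. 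The subtlety is that $Z$ must be available to the control process while respecting the measurability/adaptedness constraints defining $\mathcal{U}_0(Y_0)$.

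The main obstacle is precisely this measurability bookkeeping: the admissible controls in $\mathcal{U}_0(Y_0)$ must be $(\sigma(Y_0)\times\mathcal{F}_{0,t})$-measurable, and an extra independent switch $Z$ is not a priori allowed. The standard workaround — and what I expect the proof from \cite{Pfe15a} to do — is to \emph{encode} the randomization into the initial condition. Concretely, one enlarges $Y_0$ to a pair $(Y_0, Z)$ having the same law $\mathcal{L}(Y_0)$ in its first coordinate (so that $Y_0$ can be recovered, up to a measurable map, from this enlarged initial datum), then builds a control that on $\{Z=0\}$ follows the dynamics driven by $u$ and on $\{Z=1\}$ follows those driven by $v$. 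Because $U$ is a fixed compact set shared by both controls, the resulting switched process is still $U$-valued, is adapted in the required sense with respect to the enlarged initial variable, and its terminal law is exactly $(1-\theta)m_T^{0,Y_0,u} + \theta m_T^{0,Y_0,v}$. The reason this only gives membership in the \emph{closure} rather than in $\mathcal{R}(0,Y_0)$ itself is that the problem fixes a single initial condition $Y_0$; one must approximate the randomized initial datum by genuine controls over $Y_0$ (for instance by discretizing $Y_0$ into small cells and assigning $u$ or $v$ cell-by-cell in proportions approaching $\theta$), and pass to the limit in the $d_1$-distance, invoking continuous dependence of $m_T^{0,Y_0,u}$ on the control data under Assumption \ref{hypLipschitzCoeff}.

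Thus the key steps, in order, are: (i) reduce convexity of $\mathrm{cl}(\mathcal{R}(0,Y_0))$ to approximability of pairwise convex combinations; (ii) for fixed $u, v$ and $\theta$, construct a partition of $\R^n$ (or of the support of $\mathcal{L}(Y_0)$) into finitely many measurable pieces, and on each piece split further so that the total mass assigned to "follow $v$" is within $\varepsilon$ of $\theta$; (iii) define the corresponding admissible control — which depends only on $Y_0$ and the Brownian filtration, since the piece containing $Y_0$ is $\sigma(Y_0)$-measurable — and identify its terminal law as a mixture close to $(1-\theta)m_T^{0,Y_0,u}+\theta m_T^{0,Y_0,v}$; (iv) let $\varepsilon \to 0$ and conclude that the convex combination lies in $\mathrm{cl}(\mathcal{R}(0,Y_0))$. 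Steps (i) and (iv) are routine; the care lies in (ii)–(iii), ensuring the switching rule genuinely respects the definition of $\mathcal{U}_0(Y_0)$ and that the mixture identity for the terminal distributions holds — this is where a reader should slow down, and where I expect the argument of \cite[Lemma 6]{Pfe15a} to spend its effort.
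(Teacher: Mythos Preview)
The paper does not prove this lemma itself; it simply recalls \cite[Lemma~6]{Pfe15a}. So there is no in-paper argument to compare against, only the correctness of your plan to assess.

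Your reduction (i) and closure step (iv) are fine, and the randomization idea is the right one, but the approximation device you propose in (ii)--(iii) has a genuine gap. You encode the Bernoulli switch entirely through $\sigma(Y_0)$, by partitioning the support of $\mathcal{L}(Y_0)$ into cells and assigning $u$ or $v$ cell by cell. This requires $\sigma(Y_0)$ to be rich enough to manufacture an (approximate) Bernoulli$(\theta)$ variable. When $\mathcal{L}(Y_0)$ has atoms --- in particular when $Y_0$ is deterministic, which is precisely the setting of every numerical example in the paper ($X_0=0$) --- the $\sigma$-algebra $\sigma(Y_0)$ is trivial on each atom, no nontrivial cell splitting exists, and your construction produces only the endpoints $m_T^{0,Y_0,u}$ and $m_T^{0,Y_0,v}$, never a proper mixture.

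The repair, and almost certainly the argument in \cite{Pfe15a}, is to draw the randomization from the \emph{Brownian filtration} rather than from $Y_0$. For small $\delta>0$ pick an event $A_\delta\in\mathcal{F}_{0,\delta}$ with $\mathbb{P}(A_\delta)=\theta$ (for instance a quantile event for $W_\delta$), independent of $Y_0$; run any fixed admissible control on $[0,\delta]$; then on $(\delta,T]$ follow a time-shifted copy of $u$ on $A_\delta^{\text{c}}$ and of $v$ on $A_\delta$, driven by the increments $(W_t-W_\delta)_{t\ge\delta}$. This control is genuinely in $\mathcal{U}_0(Y_0)$, and its terminal law is an exact $\theta$-mixture of two laws which, by the Lipschitz dependence of the SDE on initial data guaranteed by Assumption~\ref{hypLipschitzCoeff}, converge in $d_1$ to $m_T^{0,Y_0,u}$ and $m_T^{0,Y_0,v}$ as $\delta\downarrow 0$. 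Relocating the source of randomness in (ii)--(iii) from $\sigma(Y_0)$ to $\mathcal{F}_{0,\delta}$ is the missing idea.
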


\paragraph{Elements on optimal transportation}

We first recall the definition of the Wasserstein distance denoted by $d_p$ (for $p \geq 1$) in this article. For all $m_1$ and $m_2$ in $\mathcal{P}_p(\R^n)$,
\begin{equation} \label{eqDefWasserstein}
d_p(m_1,m_2)= \Big[ \inf_{\pi \in \Pi(m_1,m_2)} \int_{\R^n \times \R^n} |y-x|^p \dd \pi(x,y) \Big]^{1/p},
\end{equation}
$\Pi(m_1,m_2)$ being the set of transportation mappings from $m_1$ to $m_2$ defined as:
\begin{equation*}
\Bigg\{ \pi \in \mathcal{P}(\R^{2n}) \,|\, \Big\{ \begin{array}{l} \pi(A\times \R^n)= m_1(A),\\ \pi(\R^n \times A)= m_2(A),\end{array} \, \text{for all measurable $A \subset \R^n$} \Bigg\}.
\end{equation*}
By H\"older's inequality, for all $1 \leq p \leq p'$, $\mathcal{P}_{p'}(\R^n) \subset \mathcal{P}_p(\R^n)$. Moreover, for all $1 \leq p \leq p'$ and for all $m$ and $m'$ in $\mathcal{P}_{p'}(\R^n)$,
\begin{equation*}
d_p(m,m') \leq d_{p'}(m,m').
\end{equation*}
Note that $\mathcal{P}_p(\R^n)$ equipped with $d_p$ is complete and separable \cite[Theorem 6.18]{Vil09}. Note also the dual representation of $d_1$ \cite[Remark 6.5]{Vil09}: for all $m_1$, $m_2 \in \mathcal{P}_1(\R^n)$,
\begin{equation} \label{eqDualWasserstein}
d_1(m_1,m_2)= \sup_{\phi \in 1-\text{Lip}(\R^n)} \int_{\R^n} \phi \dd(m_2- m_1),
\end{equation}
where $1-\text{Lip}(\R^n)$ is the set of Lipschitz function with modulus 1.

The following lemma is a classical result, see for example \cite[Lemma 5.7]{Car12}.

\begin{lemma} \label{lemmaCompactnessProperty}
For all $p>1$ and $R\geq 0$, the subset $\bar{B}_p(R)$ of $\mathcal{P}_1(\R^n)$ (defined in \eqref{eqDefBpR}) is compact for the $d_1$-distance.
\end{lemma}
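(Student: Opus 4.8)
The plan is to show that $\bar{B}_p(R)$ is both \emph{tight} (hence relatively compact for the weak topology, which on $\mathcal{P}_1(\R^n)$ is implied by $d_1$-convergence on bounded moment sets) and $d_1$-closed, and then to combine these two facts. More precisely, I would argue as follows.

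First I would establish \emph{tightness}. For $m \in \bar{B}_p(R)$ and $\rho > 0$, Markov's inequality gives $m(\bar B_\rho^{\,\text{c}}) \le \rho^{-p} \int_{\R^n} |x|^p \dd m(x) \le R/\rho^p$, uniformly in $m \in \bar{B}_p(R)$. Since $p > 1$ (indeed any $p>0$ suffices here, but we are given $p>1$), $R/\rho^p \to 0$ as $\rho \to \infty$, so $\bar{B}_p(R)$ is tight. By Prokhorov's theorem, $\bar{B}_p(R)$ is relatively compact for the topology of weak convergence of measures.

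Next I would upgrade weak convergence to $d_1$-convergence on this set. It is a standard fact (see e.g.\@ \cite[Theorem 6.9]{Vil09}) that $d_1(m_k,m) \to 0$ if and only if $m_k \to m$ weakly \emph{and} $\int |x| \dd m_k \to \int |x| \dd m$, i.e.\@ the first moments converge. On $\bar{B}_p(R)$ with $p>1$, the functions $x \mapsto |x|$ are uniformly integrable (again by $\int_{B_\rho^{\text{c}}} |x| \dd m \le \rho^{1-p} \int |x|^p \dd m \le R \rho^{1-p} \to 0$ uniformly), so weak convergence of a sequence in $\bar{B}_p(R)$ automatically forces convergence of first moments, and hence $d_1$-convergence. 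Combined with the previous paragraph, every sequence in $\bar{B}_p(R)$ has a subsequence converging in $d_1$ to some $m \in \mathcal{P}_1(\R^n)$.

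Finally I would check that the limit $m$ stays in $\bar{B}_p(R)$, i.e.\@ that $\bar{B}_p(R)$ is $d_1$-closed. This follows from lower semicontinuity of the map $m \mapsto \int_{\R^n} |x|^p \dd m(x)$ with respect to weak convergence (hence with respect to $d_1$-convergence): writing $\int |x|^p \dd m = \sup_{\rho} \int (|x|^p \wedge \rho) \dd m$ as a supremum of integrals of bounded continuous functions, each of which is continuous under weak convergence, the supremum is lower semicontinuous, so $\int |x|^p \dd m \le \liminf_k \int |x|^p \dd m_k \le R$. Therefore $m \in \bar{B}_p(R)$, and $\bar{B}_p(R)$ is sequentially compact, hence compact since $(\mathcal{P}_1(\R^n),d_1)$ is a metric space. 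The only mild subtlety — the ``main obstacle'', though it is really just a bookkeeping point — is the passage between weak convergence and $d_1$-convergence; this is exactly where $p>1$ is used (for $p=1$ the uniform integrability of $|x|$ may fail and $\bar{B}_1(R)$ need not be $d_1$-compact), and it is handled cleanly by the uniform integrability estimates above together with the cited characterization of Wasserstein convergence.
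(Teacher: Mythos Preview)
Your proof is correct and complete: tightness via Markov, Prokhorov, the upgrade from weak to $d_1$-convergence using uniform integrability of $|x|$ (which is precisely where $p>1$ enters), and closedness via lower semicontinuity of the $p$-th moment. The paper does not actually prove this lemma --- it simply cites \cite[Lemma 5.7]{Car12} as a classical result --- so your argument supplies exactly the standard details that the reference would contain, and there is nothing to compare.
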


A proof of the following lemma can be found in the appendix of \cite{Pfe15a}.

\begin{lemma} \label{lemmaContinuityDominatedCost}
Let $p>1$, $\phi:\R^n \rightarrow \R$ be dominated by $|x|^p$ (in the sense of \eqref{eqContPropR}).
Then, for all $R \geq 0$, the following mapping: $m \in \bar{B}_p(R) \mapsto \int_{\R^n} \phi(x) \dd m(x)$
is continuous for the $d_1$-distance.
\end{lemma}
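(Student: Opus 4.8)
The plan is to fix $R \geq 0$ and show that the map $F\colon m \in \bar B_p(R) \mapsto \int_{\R^n} \phi \dd m$ is $d_1$-continuous by a truncation argument combined with the uniform moment bound available on $\bar B_p(R)$. First I would split $\phi$ into a ``bounded part'' and a ``tail part'': fix a cutoff radius $r>0$ and write $\phi = \phi \mathbf{1}_{B_r} + \phi \mathbf{1}_{B_r^{\mathrm c}}$, or, more conveniently for the continuity estimate, introduce a continuous truncation $\phi_r$ that agrees with $\phi$ on $\bar B_r$, is bounded, and satisfies $|\phi_r| \leq |\phi|$ everywhere (e.g.\@ compose $\phi$ with a radial retraction onto $\bar B_r$, or multiply by a Lipschitz cutoff; any such choice works). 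The point of Assumption \eqref{eqContPropR} (domination by $|x|^p$) is precisely to control the error $\phi - \phi_r$, which is supported in $B_r^{\mathrm c}$: given $\varepsilon>0$, there is $r$ such that $|\phi(x)-\phi_r(x)| \leq |\phi(x)| + |\phi_r(x)| \leq 2|\phi(x)| \leq 2\varepsilon|x|^p$ for $|x|>r$, hence for every $m \in \bar B_p(R)$,
\begin{equation*}
\Big| \int_{\R^n} (\phi - \phi_r) \dd m \Big| \leq 2\varepsilon \int_{B_r^{\mathrm c}} |x|^p \dd m(x) \leq 2\varepsilon \int_{\R^n} |x|^p \dd m(x) \leq 2\varepsilon R.
\end{equation*}

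With this in hand, the argument is a standard $3\varepsilon$-splitting. Let $m_k \to m$ in $d_1$ with all $m_k, m \in \bar B_p(R)$. For the chosen $r=r(\varepsilon)$ we have $|\int (\phi-\phi_r)\dd m_k| \leq 2\varepsilon R$ and $|\int (\phi-\phi_r)\dd m| \leq 2\varepsilon R$ uniformly in $k$. It then remains to show $\int \phi_r \dd m_k \to \int \phi_r \dd m$. Since $\phi_r$ is bounded and continuous, this follows from the fact that $d_1$-convergence implies weak convergence of probability measures (narrow convergence), which is part of the standard properties of the Wasserstein distance recalled just above via \cite[Theorem 6.18]{Vil09}, or can be read off directly from the dual representation \eqref{eqDualWasserstein} after a routine approximation of a bounded continuous function by Lipschitz functions on compact sets. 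Combining, $\limsup_k |F(m_k)-F(m)| \leq 4\varepsilon R$, and since $\varepsilon>0$ is arbitrary, $F(m_k)\to F(m)$; as $\bar B_p(R)$ is metrizable this sequential continuity is continuity.

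Two small technical points deserve care, and the second is the main obstacle. First, one must make sure the truncation $\phi_r$ is genuinely continuous and bounded while satisfying $|\phi_r|\le|\phi|$ and $\phi_r=\phi$ on $\bar B_r$; composing $\phi$ with the $1$-Lipschitz radial projection $\pi_r(x) = x$ for $|x|\le r$, $\pi_r(x) = rx/|x|$ for $|x|>r$, does the job, with $\sup|\phi_r| = \sup_{\bar B_r}|\phi|<\infty$ by continuity of $\phi$ on the compact $\bar B_r$. Second — and this is where one should be slightly careful — the step $\int \phi_r \dd m_k \to \int \phi_r \dd m$ needs that $d_1$-convergence implies narrow convergence; this is true on all of $\mathcal P_1(\R^n)$ (indeed $d_1$-convergence is equivalent to narrow convergence plus convergence of first moments), so no compactness of $\bar B_p(R)$ is even required for that step, though Lemma \ref{lemmaCompactnessProperty} guarantees we stay in a nice compact set and could alternatively be used to extract subsequences. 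I would present the weak-convergence fact as a cited consequence of \cite{Vil09} rather than reprove it. Everything else is the routine $\varepsilon$-bookkeeping indicated above.
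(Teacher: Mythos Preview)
Your argument is correct and is the standard truncation-plus-uniform-integrability proof one would expect here. Note that the paper does not actually prove this lemma in the text: it simply refers the reader to the appendix of the companion paper \cite{Pfe15a}. So there is no in-paper argument to compare against, but your approach (split off the tail using the domination hypothesis together with the uniform $p$-th moment bound $R$, then invoke weak convergence for the bounded continuous remainder) is the natural one and almost certainly what appears there.

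One small bookkeeping point. Your chain $|\phi(x)|+|\phi_r(x)| \leq 2|\phi(x)|$ tacitly uses $|\phi_r| \leq |\phi|$, which holds for the Lipschitz-cutoff construction $\phi_r = \eta_r \phi$ but \emph{not} in general for the radial-retraction construction $\phi_r = \phi \circ \pi_r$ that you single out in your final paragraph. With the retraction you should instead bound $|\phi_r(x)| = |\phi(rx/|x|)| \leq \varepsilon r^p \leq \varepsilon |x|^p$ directly from the domination hypothesis (the point $rx/|x|$ has norm $r$ and hence lies in $B_r^{\text{c}}$, where \eqref{eqContPropR} applies), and the same final estimate $2\varepsilon R$ follows. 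Either truncation works; just keep the estimate consistent with whichever one you commit to.
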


\subsection{Optimality conditions}

In this section, we give a maximum principle for problem \ref{eqPb} (defined in the introduction, page \pageref{eqPb}). We start by recalling the approach by dynamic programming for a \emph{linear} cost function of the form:
\begin{equation*}
\chi(m) = \int_{\R^n} \phi(x) \dd m(x),
\end{equation*}
where $\phi:\R^n \rightarrow \R$ is continuous and dominated by $|x|^p$.
The term ``linear" refers here to the following property: for all $m_1$, $m_2 \in \mathcal{P}_p(\R^n)$, for all $\theta \in [0,1]$,
\begin{equation*}
\chi \big( \theta m_1 + (1-\theta) m_2 \big) = \theta \chi(m_1) + (1-\theta) \chi(m_2).
\end{equation*}
In this case, the derivative introduced in Assumption \ref{hypDiff} is equal to $\phi$ for all $m$.

We denote by \ref{eqLinearizedPb} the following problem:
\begin{equation*} \label{eqLinearizedPb} \tag{$P(\phi)$}
\inf_{u \in \mathcal{U}_0(Y_0)} \, \mathbb{E}\big[ \phi(X_T^{0,Y_0,u}) \big].
\end{equation*}
We call such a problem \emph{standard} problem.
We set $a(x,u)= \sigma(x,u) \sigma(x,u)^\text{t}$ and define the unminimized Hamiltonian $h(u,x,p,Q)$ and the Hamiltonian $H(x,p,Q)$ by
\begin{align*}
h(u,x,p,Q)=\ & pb(u,x) + \frac{1}{2} \text{tr}( a(x,u)Q ), \\
H(x,p,Q) =\ & \inf_{u \in U} \ h(u,x,p,Q),
\end{align*}
where $p$ is a row vector of size $n$ and $Q$ a symmetric matrix of size $n$.
As is well-known (see for example the books \cite{BCD08,FS93}),
the standard problem \ref{eqLinearizedPb} can be solved by dynamic programming. We introduce the value function $V:[0,T] \times \R^n \rightarrow \R$, defined by:
\begin{align*}
& V(t,x) = \inf_{u \in \mathcal{U}_t} \, \mathbb{E} \big[ \phi(X_T^{t,x,u}) \big], \\
& \qquad \text{s.\@ t.: }
\begin{cases} \begin{array}{rl}
\dd X_s^{t,x,u}= & f(X_s^{t,x,u},u_s) \dd s + \sigma(X_s^{t,x,u},u_s) \dd W_s, \\
X_t^{t,x,u}= & x,
\end{array}
\end{cases}
\end{align*}
where $(W_s)_{s \in [t,T]}$ is a standard Brownian motion and $\mathcal{U}_t$ the set of adapted processes with respect to the filtration generated by $(W_s)_{s \in [t,T]}$.
It is the viscosity solution to the following Hamilton-Jacobi-Bellman (HJB) equation:
\begin{equation} \label{eqAdjoint}
-\partial_t V(t,x)= H(x,\partial_x V(t,x),\partial_{xx} V(t,x)), \quad V(T,x)= \phi(x).
\end{equation}
An optimal solution $\bar{u}$ to \ref{eqLinearizedPb} is then such that for a.a.\@ $t$, $\bar{u}_t$ minimizes almost surely $h\big( \cdot, X_t^{0,Y_0,\bar{u}},\partial_x V(t,X_t^{0,Y_0,\bar{u}}),\partial_{xx} V(t,X_t^{0,Y_0,\bar{u}})\big)$, if $V$ is sufficiently regular.
The dynamic programming principle associated with $V$ states also that
\begin{equation*}
\mathbb{E}\big[ V(t,X_t^{0,Y_0,\bar{u}}) \big] \ \Big( = \int_{\R^n} V(t,x) \dd m_t^{0,Y_0,\bar{u}}(x) \Big)
\end{equation*}
is independent of $t \in [0,T]$ and equal to the value of \ref{eqLinearizedPb}.

The following theorem was proved in \cite{Pfe15a}. It is a maximum principle for problem \ref{eqPb}. For the sake of completeness, we recall here a proof.

\begin{theorem} \label{theoOptiCond}
Let $\bar{u} \in \mathcal{U}_0(Y_0)$ be an optimal solution to problem \ref{eqPb}. Then, $\bar{u}$ is a solution to the standard problem \ref{eqLinearizedPb}, where $\phi(\cdot)= D\chi(\bar{m},\cdot)$, with $\bar{m}=m_T^{0,Y_0,\bar{u}}$.
\end{theorem}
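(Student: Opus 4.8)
The plan is to argue by contradiction using a needle-type (spike) perturbation of the optimal control, combined with the directional differentiability of $\chi$ expressed in \eqref{eqDirectionalDerivative2}. Suppose $\bar u$ is optimal for \ref{eqPb} but is \emph{not} optimal for the standard problem \ref{eqLinearizedPb} with $\phi(\cdot) = D\chi(\bar m,\cdot)$, where $\bar m = m_T^{0,Y_0,\bar u}$. Then there exists $v \in \mathcal U_0(Y_0)$ with
\begin{equation*}
\mathbb{E}\big[ D\chi(\bar m, X_T^{0,Y_0,v}) \big] < \mathbb{E}\big[ D\chi(\bar m, X_T^{0,Y_0,\bar u}) \big],
\end{equation*}
i.e., writing $m_v = m_T^{0,Y_0,v}$,
\begin{equation*}
\int_{\R^n} D\chi(\bar m, x)\dd\big( m_v(x) - \bar m(x) \big) =: -\delta < 0.
\end{equation*}
The goal is to convert this strict decrease in the linearized functional into a strict decrease in $\chi$ along a curve connecting $\bar m$ to $m_v$ inside the reachable set, contradicting optimality of $\bar u$.

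First I would use the convexity of $\mathrm{cl}(\mathcal R(0,Y_0))$ (Lemma \ref{lemmaConvexity}): for $\theta \in [0,1]$, the measure $m_\theta := (1-\theta)\bar m + \theta m_v$ lies in $\mathrm{cl}(\mathcal R(0,Y_0))$. Next, apply Assumption \ref{hypDiff}, equation \eqref{eqDirectionalDerivative2}, with $m_1 = \bar m$ and $m_2 = m_v$:
\begin{equation*}
\chi(m_\theta) = \chi(\bar m) + \theta\Big[\int_{\R^n} D\chi(\bar m, x)\dd\big(m_v(x) - \bar m(x)\big)\Big] + o(\theta) = \chi(\bar m) - \theta\,\delta + o(\theta).
\end{equation*}
Hence for $\theta > 0$ small enough, $\chi(m_\theta) < \chi(\bar m)$. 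Since $m_\theta$ is only in the \emph{closure} of the reachable set, I would then invoke Assumption \ref{hypContinuity} (continuity of $\chi$ for $d_1$): pick $\varepsilon > 0$ with $\chi(m_\theta) + \varepsilon < \chi(\bar m)$, and choose $m' \in \mathcal R(0,Y_0)$ with $d_1(m', m_\theta)$ small enough that $|\chi(m') - \chi(m_\theta)| < \varepsilon$. Then $m' = m_T^{0,Y_0,u'}$ for some $u' \in \mathcal U_0(Y_0)$ with $\chi(m') < \chi(\bar m)$, contradicting the optimality of $\bar u$ for \ref{eqPb}. This establishes that $\bar u$ is optimal for \ref{eqLinearizedPb}.

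The main obstacle — and the point requiring the most care — is justifying that the $o(\theta)$ term is genuinely negligible and that all integrals in \eqref{eqDirectionalDerivative2} are well-defined along the curve $m_\theta$. Here the domination hypothesis on $D\chi(\bar m,\cdot)$ by $|x|^p$ in Assumption \ref{hypDiff}, together with uniform $p$-th moment bounds on the reachable set (which follow from Assumption \ref{hypLipschitzCoeff} via standard Gr\"onwall estimates on \eqref{eqGenSDE}, using $\mathcal L(Y_0) \in \mathcal P_p(\R^n)$ and compactness of $U$), ensures $\bar m, m_v \in \bar B_p(R)$ for some $R$ and hence $m_\theta \in \bar B_p(R)$; Lemma \ref{lemmaContinuityDominatedCost} then gives that $m \mapsto \int D\chi(\bar m,x)\dd m(x)$ is $d_1$-continuous and finite on this set, so the linearization \eqref{eqDirectionalDerivative2} is legitimate and the $o(\theta)$ is uniform enough for the argument. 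I would also remark that the constant ambiguity in $D\chi(\bar m,\cdot)$ noted after \eqref{eqDirectionalDerivative2} is harmless: adding a constant $c$ changes $\mathbb{E}[D\chi(\bar m,X_T^{0,Y_0,u})]$ by the same $c$ for every $u$, so the minimizer of \ref{eqLinearizedPb} is unaffected.
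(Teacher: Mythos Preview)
Your argument is correct and follows essentially the same route as the paper's proof --- convexity of $\mathrm{cl}(\mathcal R(0,Y_0))$ (Lemma~\ref{lemmaConvexity}), continuity of $\chi$ to pass between $\mathcal R(0,Y_0)$ and its closure, and the expansion \eqref{eqDirectionalDerivative2} --- only phrased as a contradiction rather than as the direct inequality \eqref{eqDerivationOC}. One small remark: despite your opening sentence, no needle or spike perturbation of the control is actually performed in your argument; the entire mechanism is the convexity of the closure of the reachable set, exactly as in the paper.
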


In the sequel, we call problem \ref{eqLinearizedPb} \emph{linearized problem} when $\phi(\cdot)= D\chi(\bar{m},\cdot)$, in order to emphasize its connection with problem \ref{eqPb}.

\begin{proof}

Using the continuity of $\chi$ of the $d_1$-distance, we obtain that $\bar{m}$ is optimal on cl$(\mathcal{R}(0,Y_0))$, which is by Lemma \ref{lemmaConvexity} a convex set. Therefore, for all $u \in \mathcal{U}_0(Y_0)$, for all $\theta \in [0,1]$,
\begin{align}
0 \leq \ & \chi \big( (1-\theta)\bar{m} + \theta m_T^{0,Y_0,u} \big)-\chi(\bar{m}) \notag \\
 = \ & \theta D\chi(\bar{m})(m_T^{0,Y_0,u}-\bar{m}) +o(\theta) \notag \\ 
 = \ & \theta \mathbb{E} \big[ D\chi \big( \bar{m},X_T^{0,Y_0,u} \big) - D\chi \big( \bar{m},X_T^{0,Y_0,\bar{u}} \big) \big] + o(\theta). \label{eqDerivationOC}
\end{align}
Therefore, we obtain:
\begin{equation*}
\mathbb{E} \big[ D\chi \big( \bar{m},X_T^{0,Y_0,u} \big) \big] \geq \mathbb{E}  \big[ D\chi \big( \bar{m},X_T^{0,Y_0,\bar{u}} \big) \big].
\end{equation*}
The theorem is proved.
\end{proof}

We are not able to prove the existence of an optimal solution $\bar{u} \in \mathcal{U}_0(Y_0)$ in general. However, one can derive from the continuity of $\chi$ and the compactness of $\text{cl}(\mathcal{R}(0,Y_0))$ the existence of an optimal solution $\bar{m}$ to the problem: $\inf_{m \in \text{cl}(\mathcal{R}(0,Y_0))} \chi(m)$. The probability distribution $\bar{m}$ is then a solution to $\inf_{m \in \text{cl}(\mathcal{R}(0,Y_0))} D \chi(\bar{m})m$.

Here, it is not possible in general to compute directly the value function in order to obtain a characterization of the optimal solution (as we would do to solve problem \ref{eqLinearizedPb}), since the terminal condition $D\chi(\bar{m},\cdot)$ itself depends on the optimal control.

The following lemma explains the role of the value function associated with the linearized problem when $\chi$ is convex. Note that in this case, the necessary condition of Theorem \ref{theoOptiCond} is also a sufficient condition.

\begin{lemma} \label{lemmaEstimOptimality}
Assume that $\chi$ is convex on $\mathcal{P}_p(\R^n)$, that is to say, for all $\theta \in [0,1]$, for all $m_1$ and $m_2 \in \mathcal{P}_p(\R^n)$,
\begin{equation} \label{eqConvexityChi}
\chi(\theta m_1 + (1-\theta) m_2) \leq \theta \chi(m_1) + (1-\theta) \chi(m_2).
\end{equation}
Then, for all $\bar{m} \in \text{cl}(\mathcal{R}(0,Y_0))$, the following upper estimate holds:
\begin{equation} \label{eqEstimOptimality}
\chi(\bar{m})- \Big( \inf_{m \in \text{cl}(\mathcal{R}(0,Y_0))} \chi(m) \Big) \leq D\chi(\bar{m})\bar{m} - \text{Val}(P(\phi)),
\end{equation}
where $\phi= D\chi(\bar{m},\cdot)$.
In particular, if a control process $\bar{u}$ is a solution to $P(\phi)$, with $\phi= D\chi(m_T^{0,Y_0,\bar{u}})$, then $\bar{u}$ is an optimal solution to \ref{eqPb}.
\end{lemma}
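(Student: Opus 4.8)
The plan is to obtain the estimate \eqref{eqEstimOptimality} from two ingredients: a subgradient inequality coming from the convexity of $\chi$, and the identification of $\text{Val}(P(\phi))$ with the infimum of the linear functional $m \mapsto \int_{\R^n} D\chi(\bar{m},x) \dd m(x)$ over $\text{cl}(\mathcal{R}(0,Y_0))$. The last assertion will then follow immediately by specializing $\bar{m}$ to $m_T^{0,Y_0,\bar{u}}$.

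First I would establish the subgradient inequality. Fix $\bar{m} \in \text{cl}(\mathcal{R}(0,Y_0))$ and set $\phi = D\chi(\bar{m},\cdot)$. For an arbitrary $m \in \mathcal{P}_p(\R^n)$ and $\theta \in (0,1]$, the convexity inequality \eqref{eqConvexityChi} applied to the convex combination $(1-\theta)\bar{m} + \theta m$ yields $\chi((1-\theta)\bar{m} + \theta m) - \chi(\bar{m}) \leq \theta\big(\chi(m) - \chi(\bar{m})\big)$. Dividing by $\theta$ and letting $\theta \to 0^+$, the left-hand side converges, by the directional differentiability \eqref{eqDirectionalDerivative2} of Assumption \ref{hypDiff}, to $\int_{\R^n} D\chi(\bar{m},x) \dd(m(x) - \bar{m}(x))$. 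Hence, for every $m \in \mathcal{P}_p(\R^n)$,
\[
\chi(\bar{m}) - \chi(m) \leq \int_{\R^n} D\chi(\bar{m},x) \dd(\bar{m}(x) - m(x)) = D\chi(\bar{m})\bar{m} - D\chi(\bar{m})m .
\]

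Next I would rewrite the value of the linearized problem. Since $\mathbb{E}[\phi(X_T^{0,Y_0,u})] = \int_{\R^n} \phi \dd m_T^{0,Y_0,u}$, one has $\text{Val}(P(\phi)) = \inf_{m \in \mathcal{R}(0,Y_0)} \int_{\R^n} \phi \dd m$. The reachable set lies in a ball $\bar{B}_p(R)$ for $R$ large enough — the standard uniform moment estimate for the controlled SDE under Assumption \ref{hypLipschitzCoeff} with $\mathcal{L}(Y_0) \in \mathcal{P}_p(\R^n)$ — and $\phi$ is dominated by $|x|^p$ by Assumption \ref{hypDiff}; therefore Lemma \ref{lemmaContinuityDominatedCost} shows that $m \mapsto \int_{\R^n} \phi \dd m$ is $d_1$-continuous on $\bar{B}_p(R) \supseteq \text{cl}(\mathcal{R}(0,Y_0))$, so the infimum over $\mathcal{R}(0,Y_0)$ equals the one over its closure, i.e.\@ $\text{Val}(P(\phi)) = \inf_{m \in \text{cl}(\mathcal{R}(0,Y_0))} D\chi(\bar{m})m$. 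Taking the supremum over $m \in \text{cl}(\mathcal{R}(0,Y_0))$ in the inequality of the previous step then gives
\[
\chi(\bar{m}) - \inf_{m \in \text{cl}(\mathcal{R}(0,Y_0))} \chi(m) \leq D\chi(\bar{m})\bar{m} - \inf_{m \in \text{cl}(\mathcal{R}(0,Y_0))} D\chi(\bar{m})m = D\chi(\bar{m})\bar{m} - \text{Val}(P(\phi)),
\]
which is \eqref{eqEstimOptimality}. For the final claim, if $\bar{u}$ solves $P(\phi)$ with $\phi = D\chi(\bar{m},\cdot)$ and $\bar{m} = m_T^{0,Y_0,\bar{u}}$, then $D\chi(\bar{m})\bar{m} = \mathbb{E}[\phi(X_T^{0,Y_0,\bar{u}})] = \text{Val}(P(\phi))$, so the right-hand side of \eqref{eqEstimOptimality} vanishes; since $\bar{m} \in \mathcal{R}(0,Y_0)$, this forces $\bar{m}$ to minimize $\chi$ over $\text{cl}(\mathcal{R}(0,Y_0))$, hence (using Assumption \ref{hypContinuity} to pass back to $\mathcal{R}(0,Y_0)$) $\bar{u}$ is optimal for \ref{eqPb}.

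I expect the substantive point to be the first step, passing from convexity to the subgradient inequality through the directional derivative; the care needed there is merely to divide by $\theta$ before letting $\theta \to 0^+$. The only other delicate point is justifying that the infimum over the reachable set coincides with the one over its $d_1$-closure, which rests on the moment bound placing $\mathcal{R}(0,Y_0)$ inside a fixed $\bar{B}_p(R)$ together with Lemma \ref{lemmaContinuityDominatedCost}. Everything else is bookkeeping with the identification of the linear form $D\chi(\bar{m})$ with the function $D\chi(\bar{m},\cdot)$.
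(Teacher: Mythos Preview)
Your proof is correct and follows the same approach as the paper: derive the subgradient inequality $\chi(m)-\chi(\bar m)\geq D\chi(\bar m)(m-\bar m)$ from convexity and directional differentiability, then minimize both sides over $m\in\text{cl}(\mathcal{R}(0,Y_0))$. The paper's proof is two lines and leaves implicit the identification of $\text{Val}(P(\phi))$ with $\inf_{m\in\text{cl}(\mathcal{R}(0,Y_0))}D\chi(\bar m)m$ and the derivation of the final claim, both of which you spell out carefully.
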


\begin{proof}
Since $\chi$ is convex, the following inequality holds true for all $m \in \mathcal{P}_p(\R^n)$: 
\begin{equation*}
\chi(m)-\chi(\bar{m}) \geq D\chi(\bar{m})(m-\bar{m}).
\end{equation*}
The lemma follows directly, minimizing both sides of the last inequality.
\end{proof}

\section{Numerical method} \label{section3}

\subsection{Continuous numerical method}

In this subsection, we investigate a continuous-time algorithm. Note that the corresponding discretized algorithm is the algorithm 1 studied in section \ref{subsectionAlgo}. The procedure generates a sequence $(m^\ell)_{\ell \in \mathbb{N}}$ in $\text{cl}(\mathcal{R}(0,Y_0))$. At iteration $\ell$, the linearized problem \eqref{eqLinearizedPb} is solved with $\phi= D\chi(m^\ell)$. The probability measure corresponding to an optimal solution, denoted by $\tilde{m}^{\ell +1}$, is used as a descent direction. The next iterate is chosen as the optimal probability measure on the interval $[m^\ell,\tilde{m}^{\ell +1}]$. The parameter $\varepsilon_\ell$ is non-negative and equal to 0 if and only if $m^\ell$ satisfied the optimality condition provided by Theorem \ref{theoOptiCond}. It measures the lack of optimality when $\chi$ is convex, as proved in Lemma \ref{lemmaEstimOptimality}.

\vspace{2mm}

\begin{algorithm*}[H]
Choose $m_0 \in \text{cl}(\mathcal{R}(0,Y_0))$ and set $\ell=0$\;
\For{$\ell=0,...$}{
	Compute a solution $\tilde{m}^{\ell+1}$ to:
	\begin{equation*}
	\inf_{m \in \text{cl}(\mathcal{R}(0,Y_0))} D\chi(m^\ell)m;
	\end{equation*}
	Set $\varepsilon_\ell= D\chi(m^\ell)(m^\ell-\tilde{m}^{\ell +1}) \geq 0$\;
	Compute a solution $\theta^{\ell+1}$ to:
	\begin{equation*}
	\inf_{\theta \in [0,1]} \, \chi \big( (1-\theta)m^\ell + \theta \tilde{m}^{\ell +1} \big);
	\end{equation*}
	Set $m^{\ell+1}= (1-\theta^{\ell +1})m^\ell + \theta^{\ell +1} \tilde{m}^{\ell +1}$\;
	Set $\ell= \ell+1$ \;
}
\phantom{.}
\centering
\textbf{Continuous-time algorithm:} gradient descent

\phantom{.}

\end{algorithm*}

The convergence result is based on the following assumption.

\begin{hyp} \label{hypLipDerivative}
There exists a constant $K \geq 0$ such that for all $m_1$, $m_2$, $m_3$, and $m_4$ in $\text{cl}(\mathcal{R}(0,Y_0))$, the following estimates hold:
\begin{align}
& \big( D\chi(m_2)-D\chi(m_1) \big) (m_2-m_1) \leq K d_1(m_1,m_2)^2, \label{hypLipDerivative1} \\
& \big( D\chi(m_2)-D\chi(m_1) \big) (m_4-m_3) \leq K d_1(m_1,m_2). \label{hypLipDerivative2}
\end{align}
\end{hyp}

Equation \eqref{hypLipDerivative2} is a Lipschitz-continuity property for the derivative $D\chi$. Equation \eqref{hypLipDerivative1} actually derives from \eqref{hypLipDerivative1} and is a semi-concavity property.
We provide a general class of functions for which this assumption is satisfied in section \ref{section41}, Lemma \ref{lemmaAssSatisfied}.
Note also that by \eqref{hypLipDerivative1}, for all $m_1$ and $m_2$ in $\text{cl}(\mathcal{R}(0,Y_0))$, for all $\theta \in [0,1]$,
\begin{equation}
\big( D\chi((1-\theta) m_1 + \theta m_2)-D\chi(m_1) \big) (m_2-m_1) \leq \theta K d_1(m_1,m_2)^2 \label{hypLipDerivative3}
\end{equation}

In the following theorem, we prove the existence of a limit point (to the sequence $(m^\ell)_{\ell \in \mathbb{N}}$) satisfying the optimality conditions provided by Theorem \ref{theoOptiCond}.
The used arguments are adapted from classical proofs of convergence for gradient methods, see for example \cite[Theorem 2.11]{BGLS06}. The main idea consists in finding an upper estimate of the decay of $\chi(m^\ell)$ at each iteration, derived from the Lipschitz-continuity property \eqref{hypLipDerivative1}. 

\begin{theorem} \label{theoConvergence}
Assume that Assumption \ref{hypLipDerivative} holds.
Then, the sequence $(m^\ell)_{\ell \in \mathbb{N}}$ has at least one limit point $\bar{m}$ such that:
\begin{equation} \label{eqOptiCondLim}
D\chi(\bar{m}) \bar{m} = \inf_{m \in \text{cl}(\mathcal{R}(0,Y_0))} D\chi(\bar{m})m.
\end{equation}
Moreover, $\chi(m^\ell) \rightarrow \chi(\bar{m})$ and $\varepsilon_\ell \rightarrow 0$.
\end{theorem}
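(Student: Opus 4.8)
The plan is to follow the standard convergence argument for Frank–Wolfe / conditional gradient methods, using the semi-concavity estimate \eqref{hypLipDerivative1} as a substitute for a descent lemma. First I would establish a one-step decrease inequality. Fix $\ell$ and abbreviate $m = m^\ell$, $\tilde m = \tilde m^{\ell+1}$, $\varepsilon_\ell = D\chi(m)(m - \tilde m) \geq 0$. For any $\theta \in [0,1]$, write $m_\theta = (1-\theta)m + \theta \tilde m$ and expand along the segment: since by \eqref{eqDirectionalDerivative2} (integrated, or via a mean-value argument along the segment together with the Lipschitz estimates of Assumption \ref{hypLipDerivative}) one gets
\begin{equation*}
\chi(m_\theta) - \chi(m) = \int_0^\theta D\chi(m_s)(\tilde m - m)\dd s
\leq \theta D\chi(m)(\tilde m - m) + \int_0^\theta \big(D\chi(m_s) - D\chi(m)\big)(\tilde m - m)\dd s.
\end{equation*}
Using \eqref{hypLipDerivative3} (with the roles set so that $m_s - m = s(\tilde m - m)$ and $d_1(m_s,m) \leq s\, d_1(m,\tilde m)$) the integrand is at most $s K d_1(m,\tilde m)^2$, so
\begin{equation*}
\chi(m_\theta) - \chi(m) \leq -\theta \varepsilon_\ell + \tfrac{1}{2}\theta^2 K d_1(m,\tilde m)^2.
\end{equation*}
Here I would need a uniform bound $d_1(m,\tilde m) \leq D$ on the diameter of $\mathrm{cl}(\mathcal{R}(0,Y_0))$; this follows because $\mathcal{R}(0,Y_0) \subset \bar B_p(R)$ for some $R$ (from Assumption \ref{hypLipschitzCoeff} and $\mathcal{L}(Y_0) \in \mathcal{P}_p(\R^n)$, one gets a uniform $p$-th moment bound on $m_T^{0,Y_0,u}$ via Gronwall), and $\bar B_p(R)$ is $d_1$-bounded. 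Since $m^{\ell+1} = m_{\theta^{\ell+1}}$ minimizes $\chi$ over the segment, $\chi(m^{\ell+1}) - \chi(m^\ell) \leq \inf_{\theta \in [0,1]}(-\theta \varepsilon_\ell + \tfrac12\theta^2 K D^2)$.

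Next I would exploit this recursion. Minimizing the right-hand side over $\theta \in [0,1]$: if $\varepsilon_\ell \leq KD^2$ the optimal $\theta$ is $\varepsilon_\ell/(KD^2)$, giving decrease $-\varepsilon_\ell^2/(2KD^2)$; otherwise $\theta = 1$ gives decrease $\leq -\varepsilon_\ell/2$. In both cases
\begin{equation*}
\chi(m^{\ell+1}) \leq \chi(m^\ell) - \min\!\Big( \frac{\varepsilon_\ell^2}{2KD^2},\ \frac{\varepsilon_\ell}{2}\Big).
\end{equation*}
The sequence $\chi(m^\ell)$ is nonincreasing and bounded below (by $\inf_{\mathrm{cl}(\mathcal{R})}\chi$, which is finite and attained by compactness of $\mathrm{cl}(\mathcal{R}(0,Y_0))$, Lemma \ref{lemmaCompactnessProperty}, and continuity, Assumption \ref{hypContinuity}), hence convergent; summing the inequalities forces $\sum_\ell \min(\varepsilon_\ell^2/(2KD^2), \varepsilon_\ell/2) < \infty$, and in particular $\varepsilon_\ell \to 0$ (the case $K=0$ is trivial since then $\varepsilon_\ell = 0$ for all $\ell$, or handled directly). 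Then I would extract a limit point: $(m^\ell) \subset \mathrm{cl}(\mathcal{R}(0,Y_0))$ which is $d_1$-compact, so there is a subsequence $m^{\ell_k} \to \bar m \in \mathrm{cl}(\mathcal{R}(0,Y_0))$; by continuity $\chi(m^{\ell_k}) \to \chi(\bar m)$, and since $\chi(m^\ell)$ converges, the whole sequence satisfies $\chi(m^\ell) \to \chi(\bar m)$.

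Finally I would pass the optimality condition to the limit. By definition $\varepsilon_{\ell} = D\chi(m^\ell)m^\ell - \inf_{m \in \mathrm{cl}(\mathcal{R}(0,Y_0))} D\chi(m^\ell)m$, so it suffices to show $\varepsilon_{\ell_k}$ is close to $\bar\varepsilon := D\chi(\bar m)\bar m - \inf_m D\chi(\bar m)m$ for large $k$; combined with $\varepsilon_{\ell_k} \to 0$ this yields $\bar\varepsilon = 0$, which is exactly \eqref{eqOptiCondLim}. To compare, for any competitor $m$ in the reachable set, $D\chi(m^{\ell_k})m^{\ell_k} - D\chi(m^{\ell_k})m$ differs from $D\chi(\bar m)\bar m - D\chi(\bar m)m$ by terms of the form $(D\chi(m^{\ell_k}) - D\chi(\bar m))(\cdot - \cdot)$, each bounded by $K d_1(m^{\ell_k}, \bar m)$ via \eqref{hypLipDerivative2}, plus $D\chi(\bar m)(m^{\ell_k} - \bar m)$, which tends to $0$ because $D\chi(\bar m,\cdot)$ is continuous and dominated by $|x|^p$ so $m \mapsto \int D\chi(\bar m,x)\dd m$ is $d_1$-continuous on $\bar B_p(R)$ by Lemma \ref{lemmaContinuityDominatedCost}. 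Taking the infimum over $m$ and then $k \to \infty$ gives $\liminf_k \varepsilon_{\ell_k} \geq \bar\varepsilon$ and $\limsup_k \varepsilon_{\ell_k} \leq \bar\varepsilon$, hence $\varepsilon_{\ell_k} \to \bar\varepsilon$; with $\varepsilon_{\ell_k}\to 0$ we conclude $\bar\varepsilon = 0$.

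The main obstacle I anticipate is the first step — justifying the integral/chain-rule expansion $\chi(m_\theta) - \chi(m) = \int_0^\theta D\chi(m_s)(\tilde m - m)\dd s$ rigorously, since Assumption \ref{hypDiff} only gives a first-order expansion of $\theta \mapsto \chi((1-\theta)m_1 + \theta m_2)$ at each point, not $C^1$ regularity a priori. The fix is that \eqref{eqDirectionalDerivative2} shows this scalar function is differentiable everywhere on $[0,1]$ with derivative $s \mapsto D\chi(m_s)(\tilde m - m)$ (after reparametrizing segments), and \eqref{hypLipDerivative2} shows this derivative is continuous in $s$; a differentiable function with integrable (indeed continuous) derivative satisfies the fundamental theorem of calculus, so the expansion is legitimate — but this needs to be spelled out carefully, and one must check that all the intermediate measures $m_s$ lie in $\mathrm{cl}(\mathcal{R}(0,Y_0))$ so that Assumption \ref{hypLipDerivative} applies, which holds by Lemma \ref{lemmaConvexity}.
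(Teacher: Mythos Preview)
Your proposal is correct and follows essentially the same route as the paper: both arguments expand $\chi$ along the segment $[m^\ell,\tilde m^{\ell+1}]$ via an integral of directional derivatives, bound the integrand using \eqref{hypLipDerivative3} to get the quadratic descent inequality $\chi(m_\theta)-\chi(m^\ell)\le -\theta\varepsilon_\ell+\tfrac12\theta^2 K d_1(m^\ell,\tilde m^{\ell+1})^2$, optimize over $\theta$ with the same two-case split, and then pass to the limit using compactness, \eqref{hypLipDerivative2}, and Lemma~\ref{lemmaContinuityDominatedCost}. The only cosmetic differences are that the paper keeps $d_1(m^\ell,\tilde m^{\ell+1})$ rather than the diameter $D$ until the last step, and deduces $\varepsilon_\ell\to 0$ by directly inverting the decay inequalities rather than via summability; your extra care about justifying the fundamental-theorem-of-calculus step is a point the paper leaves implicit.
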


\begin{proof}
\emph{Step 1:} estimate of the decay at iteration $\ell$. By \eqref{hypLipDerivative3}, for all $\theta \in [0,1]$,
\begin{equation} \label{eqEstimeCv1}
D\chi \big( (1-\theta) m^\ell + \theta \tilde{m}^{\ell +1} \big)(\tilde{m}^{\ell +1}-m^{\ell}) \leq - \varepsilon_{\ell} + \theta K d_1(m^{\ell},\tilde{m}^{\ell +1})^2.
\end{equation}
We define then:
\begin{equation*}
\theta_0^{\ell +1}= \min \Big( \frac{\varepsilon_{\ell}}{K d_1(m^\ell,\tilde{m}^{\ell + 1})^2},1 \Big).
\end{equation*}
Since $\theta^{\ell +1}$ is optimal, it holds:
\begin{align}
\chi(m^{\ell +1})
\leq \ & \chi \big( (1-\theta_0^{\ell +1}) m^\ell + \theta_0^{\ell +1} \tilde{m}^{\ell +1} \big) \notag \\
\leq \ & \chi(m^\ell) + \int_0^{\theta_0^{\ell +1}} D\chi((1-\theta)m^\ell + \theta \tilde{m}^{\ell +1})(\tilde{m}^{\ell + 1}-m^\ell) \dd \theta \notag \\
\leq \ & \chi(m^\ell) + \int_0^{\theta_0^{\ell +1}} \!\! \big[ -\varepsilon_{\ell} + \theta K d_1(m^\ell,\tilde{m}^{\ell +1})^2 \big] \dd \theta \notag \\
\leq \ & \chi(m^\ell) - \varepsilon_{\ell} \theta_0^{\ell +1} + \frac{1}{2} (\theta_0^{\ell +1})^2 K d_1(m^\ell,\tilde{m}^{\ell +1})^2. \label{eqEstimeCv2}
\end{align}
We distinguish now two cases.
\begin{itemize}
\item If $\theta_0^{\ell+1}= \frac{\varepsilon_{\ell}}{K d_1(m^\ell,\tilde{m}^{\ell + 1})^2}$, then
by \eqref{eqEstimeCv2},
\begin{equation} \label{eqEstimeCv3}
\chi(m^{\ell +1}) - \chi(m^{\ell}) \leq - \frac{\varepsilon_{\ell}^2}{2K d_1(m^{\ell},\tilde{m}^{\ell +1})^2}
\leq - \frac{\varepsilon_{\ell}^2}{2KD^2},
\end{equation}
where $D$ is the diameter of $\text{cl}(\mathcal{R}(0,Y_0))$:
\begin{equation} \label{eqDiameter}
D= \sup_{m_1,m_2 \in \text{cl}(\mathcal{R}(0,Y_0))} d_1(m_2,m_1) < +\infty.
\end{equation}
The diameter is finite, since $\text{cl}(\mathcal{R}(0,Y_0))$ is compact, by Lemma \ref{lemmaCompactnessProperty}.
\item If $\theta_0^{\ell+1}= 1$, then $Kd_1(m^\ell,\tilde{m}^{\ell +1})^2 \leq \varepsilon_{\ell}$ and therefore, by \eqref{eqEstimeCv2},
\begin{equation} \label{eqEstimeCv4}
\chi(m^{\ell +1}) - \chi(m^{\ell}) \leq - \varepsilon_{\ell} + \frac{1}{2} K d_1(m^\ell, \tilde{m}^{\ell +1})^2 \leq -\frac{1}{2} \varepsilon_{\ell}.
\end{equation}
\end{itemize}

\emph{Step 2:} conclusion. The existence of a converging subsequence is a consequence of the compactness of $\text{cl}(\mathcal{R}(0,Y_0))$ proved in Lemma \ref{lemmaCompactnessProperty}. Since $\chi(m^\ell)$ is decreasing and since $\chi$ is continuous for the $d_1$-distance, $\chi(m^\ell) \rightarrow \chi(\bar{m})$. 
Therefore, $\chi(m^{\ell})-\chi(m^{\ell + 1}) \rightarrow 0$ and as a consequence of \eqref{eqEstimeCv3} and \eqref{eqEstimeCv4}:
\begin{equation*}
0 \leq \varepsilon_{\ell} \leq \max \big[ 2(\chi(m^\ell)-\chi(m^{\ell +1})),
\big( 2KD^2(\chi(m^\ell)-\chi(m^{\ell+1}))\big)^{1/2}
\big] \rightarrow 0.
\end{equation*}
Finally, we prove \eqref{eqOptiCondLim}. Let $m \in \text{cl}(\mathcal{R}(0,Y_0))$. Observe first that:
\begin{align*}
& D\chi(m^\ell)(m-m^\ell)-D\chi(\bar{m})(m-\bar{m})
= \\
& \qquad \big( D\chi(m^\ell)-D\chi(\bar{m}) \big)(m-m^\ell) + D\chi(\bar{m})(\bar{m}-m^\ell) \rightarrow 0.
\end{align*}
Indeed, the first term of the r.h.s.\@ converges to 0 by \eqref{hypLipDerivative2}; the second term also converges to 0 by Lemma \ref{lemmaContinuityDominatedCost}. Then:
\begin{equation*}
0 \leq \lim -\varepsilon_\ell \leq \lim D\chi(m^\ell)(m-m^\ell) = D\chi(\bar{m})(m-\bar{m}),
\end{equation*}
which concludes the proof.
\end{proof}

\subsection{Discretization of the control process and state equation}

We discretize the SDE as a controlled Markov chain on a finite subset of $\R^n$ with a semi-Lagrangian scheme, as in \cite{CF95}.

We first discretize the state space. Let $N_X \in \mathbb{N} \backslash \{ 0 \}$ and let $S=\{ x_i \,|\, i=1,...,N_X \}$ be a set of $N_X$ points in $\R^n$. We discretize the state equation so that the state variable takes only values in $S$. We set $\mathcal{X}= \text{conv}(S)$ and denote by $P_{\mathcal{X}}$ the orthogonal projection on $\mathcal{X}$. We denote by $\mathcal{P}(S)$ the set of probability measures in $S$, that we identify with:
$\big\{ \alpha \in \R_+^{N_X} \,|\, \sum_{k=1}^{N_X} \alpha_k = 1 \big\}.
$

For all $q \in \mathbb{N}$ and for all families $(x_i)_{i=1,...,q+1}$ in $\R^n$, we say that the convex envelope $\mathcal{T}$ of the set $\{ x_1,...,x_{q+1} \}$ is a non-degenerate $q$-simplex if the family $(x_2-x_1,...,$ $x_{q+1}-x_1)$ is linearly independent. The points $x_1$,...,$x_{q+1}$ are then called vertices (of the simplex $\mathcal{T}$).
Let $\mathcal{T}=(\mathcal{T}_i)_{i=1,...,N}$ be a family of $n$-simplices with vertices in $\{ x_1,...,x_{N_X} \}$. This family is called triangulation if $\mathcal{X}= \cup_{i=1}^N \mathcal{T}_i$ and if for all $1 \leq i < j \leq N$, $\mathcal{T}_i \cap \mathcal{T}_j$ is either empty or is a non-degenerate $q$-simplex with $q < n$ and with all its vertices in the intersection of the set of vertices of $\mathcal{T}_i$ and $\mathcal{T}_j$.

Henceforth we assume that a triangulation of $S$ is given. It is easy to check that for all $x \in \mathcal{X}$, there exists a unique vector $(\alpha_\ell(x))_{\ell=1,...,N_X}$ in $\mathcal{P}(S)$ such that $x= \sum_{\ell=1}^{N_X} \alpha_\ell(x) x_\ell$,
and such that there exists $r \in \{ 1,...,N \}$ for which $\{ x_\ell \,|\, \alpha_\ell(x) > 0 \} \subset \mathcal{T}_r$.

We introduce now a discretization in time. Let $N_T \in \mathbb{N} \backslash \{ 0 \}$. We set: $\delta t = T/N_T$ and define for all $x \in \R^n$, for all $u \in U$, and for all $i=1,...,2d$:
\begin{equation*}
F(x,u,i)=
\begin{cases}
\begin{array}{ll}
x + f(x,u)\delta t + \sigma_i(x,u) \sqrt{d\delta t} & \text{for $i=1,...,d$} \\
x + f(x,u)\delta t - \sigma_i(x,u) \sqrt{d\delta t} & \text{for $i=d+1,...,2d$}.
\end{array}
\end{cases}
\end{equation*}
Here, $\sigma_i(x,u)$ stands for the $i$-th column of $\sigma(x,u)$. The underlying idea is the following: with probability $1/(2d)$, the variation of the Brownian motion $\text{d} W_t$ is equal to one of the vector of the canonical basis of $\R^d$ multiplied by $\sqrt{d\delta t}$ or $-\sqrt{d\delta t}$.

We can now combine the discretization in time and space. The index $j$ is a time index and the index $k$ is a space index.
The considered control processes are only feedback controls, that is to say, elements of $U^{N_T \times N_X}$. At time $j$, the control process $u \in U^{N_T \times N_X}$ is seen as a function $u_j \in U^{N_X}$ of $1,...,N_X$. For $v \in U$, we set:
\begin{equation*}
P_{v}(k,k')= \frac{1}{2d} \sum_{i=1}^{2d} \alpha_{k'}(P_{\mathcal{X}}(F(x_k,v,i))) \geq 0,
\end{equation*}
where $P_{\mathcal{X}}$ is the projection on $\mathcal{X}$.
Note that $\sum_{k'=1}^{N_X} P_v(k,k')=1$.
For $u_j \in U^{N_X}$, we denote:
\begin{equation*}
P_{u_j}(k,k')= \frac{1}{2d} \sum_{i=1}^{2d} \alpha_{k'}(P_{\mathcal{X}}(F(x_k,u_j(k),i))),
\end{equation*}
The distinction between the two notations will be clear from the context. Note that $P_{u_j}$ is a stochastic matrix.
For $u \in U^{N_T \times N_X}$ the discretized process is denoted $(X_j^u)_{j=0,...,N_T}$ and the corresponding probability measure $(m_j^u)_{j=0,...,N_T}$. The state equation is now given by: $\forall k,k'=1,...,N_X$, $\forall j=0,...,N_T-1$,
\begin{equation}
\mathbb{P} \big[ X_{j+1}= x_{k'} \,|\, X_j= x_k \big] = P_{u_j}(k,k'),
\end{equation}
and the dynamics of its probability distribution in $\mathcal{P}(S)$ is simply given by
$
m_{j+1} = P_{u_j}^{\text{t}} m_j.
$
This equation is nothing but the Chapman-Kolmogorov equation for Markov chains.
Finally, we consider the following discretization of the probability distribution $\mathcal{L}(Y_0)$: 
\begin{equation*}
m_0(k)= \mathbb{E}\big[ \alpha_k (P_\mathcal{X}(Y_0)) \big].
\end{equation*}
Note that in \cite{CF95}, a convergence result for the associated value function is provided, but not for the Markov chain itself.

\paragraph{On the evaluation of $\chi$}

The cost functional $\chi$ needs only to be evaluated on $\mathcal{P}(S)$, which is the set of probability measures on $S=(x_i)_{i=1,...,N_X}$. Observe that for all $m_1$, $m_2 \in \mathcal{P}(S)$, which can be identified with elements of $\R^{N_X}$, it holds:
\begin{align*}
\ & \chi((1-\theta)m_1 + \theta m_2)-\chi(m_1) \\
= \ & \theta \int_{\R^n} D\chi(m_1,x) \dd \big( \textstyle{\sum_{k=1}^{N_X}} (m_2(k)-m_1(k)) \delta_{x_k}(x) \big) + o(\theta) \notag \\
= \ &\theta \sum_{k=1}^{N_X} D\chi(m_1,x_k)(m_2(k)-m_1(k)) + o(\theta).
\end{align*}
This relation shows that when considering the restriction of $\chi$ to $\mathcal{P}(S)$, it is sufficient to consider the derivative $D\chi(m_1,\cdot)$ by the value taken at the points $x_1$,...,$x_{N_X}$.

\subsection{Algorithms} \label{subsectionAlgo}

We consider the full discretization (in time and space) of the state process. Assume that $\chi$ and $D\chi$ can be computed on $\mathcal{P}(S)$.
We denote by $\mathcal{R}$ the set of reachable probability measures (for the discretized process):
\begin{equation*}
\mathcal{R}= \{ m_{N_T}^{ u} \in \mathcal{P}(S) \,|\, u \in U^{N_T \times N_X} \}
\end{equation*}
We describe now two methods to solve the problem.

\paragraph{Algorithm 1: Gradient method}

The first method that we describe may be seen as a gradient method. It generates a sequence $(m^\ell)_{\ell \geq 1}$ in $\text{conv}(\mathcal{R})$. Assuming that a stopping criterion has been fixed, the method is the following.

\vspace{3mm}

\begin{algorithm}[H]
Choose $u \in U^{N_X \times N_T}$, compute $m^0:= m_{N_T}^{u}$, and set $\ell=0$\;
\While{$m^\ell$ does not satisfy the stopping criterion}{
	\emph{Backward phase}. Compute $D\chi(m^\ell,\cdot)$, and a solution $u^{\ell+1}$ to the problem:
	\begin{equation} \label{eqDiscLinProblem}
	\inf_{u \in U^{N_T \times N_X}} \mathbb{E}\big[D\chi(m^\ell,X_{N_T}^{u}) \big];
	\end{equation}
	\emph{Forward phase}. Compute $\tilde{m}^{\ell+1}:= m_{N_T}^{u^{\ell+1}}$\;
	\emph{Stepsize}. Find an approximate solution $\theta_{\ell+1}$ to:
	\begin{equation} \label{eqStepSize}
	\inf_{\theta \in [0,1]} \chi(\theta m^\ell + (1-\theta) \tilde{m}^{\ell+1}),
	\end{equation}
	and set $m^{\ell+1}= (1-\theta_{\ell+1}) m^\ell + \theta_{\ell+1} \tilde{m}^{\ell+1}$\;
	Set $\ell= \ell+1$\;
}
\KwResult{probability distribution $m^\ell$ at the final time}
\phantom{}
\caption{Gradient method}
\label{algo1}
\end{algorithm}

\vspace{3mm}

Let us describe and comment on the different steps of the method.
\begin{itemize}
\item \emph{Backward phase}. We compute the value function $V:\{ 0,1,...,N_T \} \times \{ 1,...,N_X \}$ which is associated with \eqref{eqDiscLinProblem} by backward induction: $\forall k=1,...,N_X$,
\begin{align}
V_{N_T}(k) & = D\chi(m^\ell,x_k), \quad  \\
V_{j}(k) & = \inf_{u \in U} \, \Big\{ \sum_{k'=1}^{N_X} P_u(k,k') V_{j+1}(k') \Big\},\quad \forall j=N_T-1,...,0. \label{eqDynProgForScheme}
\end{align}
For all $j \in \{ 0,...,N_T-1 \}$ and $k=1,...,N_X$, let $u_j(k)$ be an optimal solution of \eqref{eqDynProgForScheme} --- a solution exists, since $U$ is compact and $P_u(k,k')$ continuous with respect to $u$. We set $u^{\ell+1}= (u_j(k))_{j=0,...,N_T-1,\, k=1,...,N_X}$. Then, $u^{\ell+1}$ is an optimal solution to \eqref{eqDiscLinProblem}.
\item \emph{Forward phase}. The probability measure $\tilde{m}^{\ell+1}= m_{N_T}^{u^{\ell+1}}$ is obtained as follows:
\begin{equation} \label{eqChapman}
\tilde{m}^{\ell+1}= P_{u_{N_T-1}^{\ell}}^{\text{t}}...P_{u_0^{\ell}}^{\text{t}} m_0.
\end{equation}
Eequation \eqref{eqChapman} is nothing but the Chapman-Kolmogorov associated with the Markov chain.
Observe that $\tilde{m}^{\ell+1}$ is an optimal solution to:
$
\inf_{m \in \mathcal{R}} D\chi(m^\ell)m.
$
The probability measure that we have obtained here plays the role of a descent direction.
\item \emph{Stepsize}. In step 3, different approaches can be considered for computing $\theta_{\ell+1}$, depending on properties of $\chi$.
An enumeration technique based on a discretization of $[0,1]$ can be employed. If $\chi$ is convex, then a bisection method can be used. One can also look for a stepsize satisfying the usual stepsize rules  for line-search methods (Armijo, Wolfe-Powell, see \cite[Section 3.4]{Bie10}), rather than looking for an optimal stepsize.
\item \emph{Stopping criterion}.
Different criteria can be considered.
A possible one is the following: given $\varepsilon>0$, we stop at iteration $\ell$ if
\begin{equation}
\varepsilon_\ell:= -\sum_{k=1}^{N_X} m_0(k) V_0(k) + D\chi(m^\ell)m^\ell \leq \varepsilon,
\end{equation}
where $V_0(\cdot)$ is the value function associated with problem \eqref{eqDiscLinProblem}. The value of problem \eqref{eqDiscLinProblem} is then $\sum_{k=1}^{N_X} m_0(k) V_0(k)$. If $\chi$ is convex,
$m^\ell$ is $\varepsilon_{\ell}$-optimal for the discretized problem, similarly to Lemma \ref{lemmaEstimOptimality}.
\end{itemize}

\begin{remark} \begin{enumerate}
\item The sequence of probability measures $(m_\ell)_{\ell \in \mathbb{N}}$  which is generated does not belong to $\mathcal{R}$ in general, but only to its convex envelope. Such an approach is motivated by Lemma \ref{lemmaConvexity}. Note the expression:
\begin{equation*}
m^\ell= \sum_{i=0}^\ell \Big( \Pi_{j=i+1}^{\ell} (1-\theta_j) \Big)\theta_i \tilde{m}^i, \quad \forall \ell, \text{ where: $\theta_0=1$}.
\end{equation*}
\item If $\chi$ is concave (that is to say, if $-\chi$ is convex in the sense of \eqref{eqConvexityChi}), then the optimal solution to \eqref{eqStepSize} is 1. In this case, there is a (discretized) feedback control associated with each probability measure $m^\ell$.
The algorithm falls then into the general framework of \cite{ST11}. In general, if $\chi$ is not convex, we do not expect to find an approximation of a global minimizer.
\end{enumerate}
\end{remark}

\paragraph{Algorithm 2: obtaining a feedback control by a penalization technique}

We suggest here a variant of the method 1 that may enable us to find a solution as a feedback control. The basic idea is the following. Assuming  that at iteration $\ell$, we have a feedback control $u^\ell$ with associated probability distribution $m^\ell= m_{N_T}^{u^\ell}$. Given a coefficient $\alpha>0$, we consider the following linearized and penalized problem:
\begin{equation*} \tag{$P(u^\ell,m^\ell,\alpha)$}  \label{eqPenalization}
\inf_{u \in U^{N_T \times N_X}} \Big\{ \mathbb{E} \big[ D\chi(m^\ell,X_{N_T}^{u}) \big] + \alpha \mathbb{E}\big[ \sum_{j=0}^{N_X} |u_j(X_j^u) - u_j^\ell(X_j^u)|^2 \big] \Big\}.
\end{equation*}
If the coefficient $\alpha$ is large enough, the solution $u'$ to \eqref{eqPenalization} may satisfy $\chi(m_{N_T}^{u'}) < \chi(m_{N_T}^{u^\ell})$. In this way, we avoid the line-search used in the previous algorithm.
Of course, it is not desirable to have a too large coefficient $\alpha$, since this may slow down the procedure. We therefore choose two functions $h^+$ and $h^-:\R_+ \rightarrow \R_+$ satisfying:
\begin{equation*}
0 \leq h^-(\alpha) \leq \alpha \leq h^+(\alpha), \quad \forall \alpha \geq 0,
\end{equation*}
in order to update the coefficient $\alpha$ throughout the procedure, which we present next.

\vspace{2mm}

\begin{algorithm}[H]
Choose $u^0 \in U^{N_X \times N_T}$, compute $m^0= m_{N_T}^{u}$, choose $\alpha_0>0$, set $\ell=0$, and set $q=0$\;
\While{$m^\ell$ does not satisfy the stopping criterion}{
	Solve the linearized and penalized problem $P(u^\ell,m^\ell,\alpha_\ell)$. Denote by $u^{\ell+1}$ its solution, set $m^{\ell+1}= m_{N_T}^{u^{\ell+1}}$ and $q= q+1$\;
	\eIf{$\chi(m^{\ell+1}) < \chi(m^\ell)$}{
   Set $\alpha_{\ell+1}= h^-(\alpha_\ell)$\;
   }{
   \While{$\chi(m^{\ell+1}) \geq \chi(m^\ell)$}{
   		Set $\alpha_\ell= h^+(\alpha_{\ell})$\;
   		Compute a solution $u^{\ell+1}$ to $P(u^\ell,m^\ell,\alpha_\ell)$, set $m^{\ell+1}= m_{N_T}^{u^{\ell+1}}$, and $q= q+1$\;
   		}
   		Set $\alpha_{\ell+1} = \alpha_{\ell}$\;
   }
   Set $\ell= \ell+1$.
}
\KwResult{probability distribution $m^\ell$ at the final time, optimal feedback control $u^\ell$}
\phantom{}
\caption{Variant with a penalization term}
\end{algorithm}

\vspace{2mm}


The variable $q$ does not play any role, it simply counts the number of backward and forward passes.
Note that the problem \ref{eqPenalization} can be solved by dynamic programming, using the following value function: $\forall k= 1,...,N_X$,
\begin{align*}
V_{N_T}(k) & = D\chi(m^\ell,x_k),\\
V_{j}(k) & = \inf_{u \in U} \Big\{ \sum_{k'=1}^{N_X} P_u(k,k') V_{j+1}(k') + \alpha |u-u_j^{\ell}(k)|^2 \Big\}, \ \ \forall j=N_T-1,...,0. \label{eqDynProgForScheme}
\end{align*}
Note also that there are other ways of ``penalizing" the linearized problem, that we do not discuss here.
We finish this section by general remarks for the two algorithms.

\begin{remark} \label{remAlgo}
\begin{enumerate}
\item In general, neither Algorithm 1 nor Algorithm 2 converges to a solution of the discretized problem, since they are only gradient methods.
\item Other discretizations of the SDE are possible, based on implicite finite differences as in \cite{ACCD13}, for example.
\item From a computational point of view, the backward phase is more difficult than the forward phase, since it requires a pointwise minimization. Different techniques for the minimization problem \eqref{eqDynProgForScheme} are suggested in the litterature that we do not discuss here, see e.g. \cite{KKK15}. The simplest method for the minimization is the technique by enumeration on a finite subset of $U$.
\item Denote by $\delta x$ an upper estimate of the diameters of the simpleces of the triangulation. In \cite[Remark 3.3]{CF95}, an error estimate for the approximation of the value function obtained with a semi-Lagrangian scheme involving the ratio $\delta x / \delta t$ (in the case of a Lipschitz data with respect to the state variable) is provided. Therefore, $\delta x$ should be much smaller than $\delta t$. In this situation, the computed probability distribution has an oscillatory behaviour, which is however in our opinion acceptable, see the discussion in section \ref{section5}.
\end{enumerate}
\end{remark}

\section{Examples of cost functions} \label{section4}

In this section, we describe different cost functions $\chi$ and check Assumptions \ref{hypContinuity} and \ref{hypDiff}. The real number $p \geq 2$ is such that $Y_0 \in \mathcal{P}_p(\R^n)$. For the following examples, it is in general only possible to check these assumptions on $\bar{B}_p(R)$. The optimality condition given in Theorem \ref{theoOptiCond} remains true, however, since $\text{cl}(\mathcal{R}(0,Y_0)) \subset \bar{B}_p(R)$, for $R \geq 0$ large enough.

\subsection{Composition of linear costs} \label{section41}

A general class of cost functions can be described as follows. Let $N \in \mathbb{N}$, let $\Psi:\R^N \rightarrow \R$, let $\phi_1$,...,$\phi_N:\R^n \rightarrow \R$ be $N$ continuous functions all dominated by $|x|^p$.
We define then on $\mathcal{P}_p(\R^n)$:
\begin{equation} \label{eqExample1}
\chi(m)= \Psi \Big( \int_{\R^n} \phi_1(x) \dd m(x), ..., \int_{\R^n} \phi_N(x) \dd m(x) \Big).
\end{equation}
For all $R \geq 0$, the continuity on $\bar{B}_p(R)$ is ensured by Lemma \ref{lemmaContinuityDominatedCost}.
Denoting by $y_1$,...,$y_N$ the variables of $\Psi$, the derivative of $\chi$ is given by:
\begin{equation} \label{eqDiffFunctionExp}
D\chi(m)= \sum_{i=1}^N \partial_{y_i} \Psi \Big( \int_{\R^n} \phi_1(x) \dd m(x), ..., \int_{\R^n} \phi_N(x) \dd m(x) \Big) \phi_i(\cdot).
\end{equation}
The differentiability of $\chi$ as well as formula \eqref{eqDiffFunctionExp} can be easily checked.

In this setting, denoting by $H$ the following subset of $\R^N$:
\begin{align*}
\Big\{ y \in \R^N \,|\, \exists m \in \text{cl}(\mathcal{R}(0,Y_0)),\, & \text{such that }\forall i=1,...,N,\, \\
& y_i= \int_{\R^n} \phi_i(x) \dd m_T^{0,Y_0,u}(x) \Big\},
\end{align*}
problem \eqref{eqPb} has the same value as the following problem: $\min_{y \in H} \, \Psi(y)$.
As a consequence of Lemma \ref{lemmaConvexity} and Lemma \ref{lemmaContinuityDominatedCost}, $H$ is compact and convex. Let us denote by $\Phi$ the characteristic function of the set $H$:
\begin{equation*}
\Phi(y)= \begin{cases}
\begin{array}{cl}
0 & \text{ if $y \in H$} \\
+\infty & \text{ otherwise}.
\end{array}
\end{cases}
\end{equation*}
The conjugate function of $\Phi$ is given by:
\begin{align} 
\Phi^*(\lambda)=\ & \sup_{y \in \R^N} \langle \lambda, y \rangle - \Phi(y)
= \sup_{y \in H} \, \langle \lambda, y \rangle \notag \\
=\ & \sup_{u \in \mathcal{U}_0(Y_0)} \Big\{ \int_{\R^n} \sum_{i=1}^N \lambda_i \phi_i(x) \dd m_T^{0,Y_0,u}(x) \Big\}. \label{eqConjPhi}
\end{align}
By Fenchel-Moreau-Rockafellar theorem,
\begin{equation*}
\Phi(y) = \sup_{\lambda \in \R^N} \big\{ \langle \lambda, y \rangle - \Phi^*(\lambda) \big\},
\end{equation*}
or equivalently, $H$ can be described as an intersection of hyperplanes:
\begin{equation*}
H= \cup_{\lambda \in \R^N} \{ y \,|\, \langle \lambda, y \rangle \leq \Phi^*(\lambda) \}.
\end{equation*}
This means that one can build an outer polyhedral approximation of $H$, by selecting different values of $\lambda \in \R^n$, and computing the value of $\Phi^*$ by solving the corresponding standard optimal control problem \eqref{eqConjPhi}.

\begin{lemma} \label{lemmaAssSatisfied}
If the functions $\phi_1$,...,$\phi_N$ are Lipschitz continuous and dominated by $|x|^p$ and if $\Psi$ has a Lipschitz derivative, then Assumption \ref{hypLipDerivative} is satisfied.
\end{lemma}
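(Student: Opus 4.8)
The plan is to verify directly that the two inequalities in Assumption~\ref{hypLipDerivative} follow from the chain rule expression \eqref{eqDiffFunctionExp} together with the Lipschitz hypotheses. Write $\Phi_i(m) := \int_{\R^n} \phi_i \dd m$ and $\mathbf{\Phi}(m) := (\Phi_1(m),\dots,\Phi_N(m))$, so that $\chi(m) = \Psi(\mathbf{\Phi}(m))$ and, by \eqref{eqDiffFunctionExp}, for any $m,m'$ in $\text{cl}(\mathcal{R}(0,Y_0))$,
\begin{equation*}
D\chi(m)(m'-m) = \sum_{i=1}^N \partial_{y_i}\Psi(\mathbf{\Phi}(m)) \, \big( \Phi_i(m') - \Phi_i(m) \big).
\end{equation*}

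First I would record two elementary estimates. Since each $\phi_i$ is Lipschitz with some constant $L_i$, the dual representation \eqref{eqDualWasserstein} gives $|\Phi_i(m_1)-\Phi_i(m_2)| \leq L_i\, d_1(m_1,m_2)$ for all $m_1,m_2$; hence $|\mathbf{\Phi}(m_1)-\mathbf{\Phi}(m_2)| \leq C_1\, d_1(m_1,m_2)$ with $C_1 = (\sum_i L_i^2)^{1/2}$. Moreover, on the compact set $\text{cl}(\mathcal{R}(0,Y_0))$ (compactness from Lemma~\ref{lemmaCompactnessProperty}, using $\text{cl}(\mathcal{R}(0,Y_0)) \subset \bar B_p(R)$) the continuous map $m \mapsto \mathbf{\Phi}(m)$ has bounded range, so $\mathbf{\Phi}(\text{cl}(\mathcal{R}(0,Y_0)))$ lies in a compact set $\mathcal{K} \subset \R^N$; on a neighbourhood of $\mathcal{K}$ the map $\nabla\Psi$ is bounded, say by $C_2$, and Lipschitz with constant, say, $C_3$ (these are exactly the hypotheses on $\Psi$).

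With these in hand, the second inequality \eqref{hypLipDerivative2} is immediate: for $m_1,m_2,m_3,m_4$ in $\text{cl}(\mathcal{R}(0,Y_0))$,
\begin{equation*}
\big(D\chi(m_2)-D\chi(m_1)\big)(m_4-m_3)
= \big\langle \nabla\Psi(\mathbf{\Phi}(m_2)) - \nabla\Psi(\mathbf{\Phi}(m_1)),\ \mathbf{\Phi}(m_4)-\mathbf{\Phi}(m_3) \big\rangle,
\end{equation*}
whose absolute value is at most $C_3\,|\mathbf{\Phi}(m_2)-\mathbf{\Phi}(m_1)| \cdot |\mathbf{\Phi}(m_4)-\mathbf{\Phi}(m_3)| \leq C_3 C_1^2 D\, d_1(m_1,m_2)$, where $D$ is the diameter \eqref{eqDiameter}; this is \eqref{hypLipDerivative2} with $K = C_3 C_1^2 D$ (enlarging $K$ if needed to also serve below). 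For \eqref{hypLipDerivative1}, take $m_3=m_1$, $m_4=m_2$ in the same computation to get $\big(D\chi(m_2)-D\chi(m_1)\big)(m_2-m_1) \leq C_3 C_1^2\, d_1(m_1,m_2)^2$, which is exactly the semiconcavity bound. (The text's parenthetical remark that \eqref{hypLipDerivative1} "derives from" \eqref{hypLipDerivative2} is consistent with this: \eqref{hypLipDerivative1} is the special case $m_3=m_1$, $m_4=m_2$, absorbing the factor $d_1(m_3,m_4)\le D$.) Thus Assumption~\ref{hypLipDerivative} holds with $K := C_1^2 C_3 \max(1,D)$.

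I do not anticipate a serious obstacle; the only point requiring a little care is that the Lipschitz hypothesis on $\nabla\Psi$ is presumably global (or at least on a neighbourhood of $\mathcal{K}$), so one must first invoke compactness of $\text{cl}(\mathcal{R}(0,Y_0))$ to confine the arguments $\mathbf{\Phi}(m)$ to a fixed compact set before using the Lipschitz and boundedness constants for $\nabla\Psi$ — otherwise $C_2,C_3$ would not be uniform. Everything else is the chain rule plus the Kantorovich--Rubinstein duality \eqref{eqDualWasserstein}.
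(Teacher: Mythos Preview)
Your proposal is correct and follows essentially the same route as the paper: both express $(D\chi(m_2)-D\chi(m_1))(m_4-m_3)$ as the inner product $\langle \nabla\Psi(\mathbf{\Phi}(m_2))-\nabla\Psi(\mathbf{\Phi}(m_1)),\ \mathbf{\Phi}(m_4)-\mathbf{\Phi}(m_3)\rangle$, bound each factor via the Lipschitz constants of the $\phi_i$ (through Kantorovich--Rubinstein duality) and of $\nabla\Psi$, and then specialize $m_3=m_1$, $m_4=m_2$ for \eqref{hypLipDerivative1} and use the diameter $D$ for \eqref{hypLipDerivative2}. Your extra care in invoking compactness of $\text{cl}(\mathcal{R}(0,Y_0))$ to localize the constants for $\nabla\Psi$ is a harmless refinement; the paper simply treats $K_{D\Psi}$ as a global Lipschitz modulus.
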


\begin{proof}
Let $K_\phi$ be the Lipschitz modulus of the functions $\phi_1$,...,$\phi_N$, let $K_{D\Psi}$ be the Lipschitz modulus of $D\Psi$. Let $m_1$, $m_2$, $m_3$, and $m_4$ in $\text{cl}(\mathcal{R}(0,Y_0))$. We set:
\begin{align*}
& y_1= \Big( \int_{\R^n} \phi_1 \dd m_1,...,\int_{\R^n} \phi_N \dd m_1 \Big), \\
& y_2= \Big( \int_{\R^n} \phi_1 \dd m_2,...,\int_{\R^n} \phi_N \dd m_2 \Big).
\end{align*}
One can show that:
\begin{align*}
\big| \big( D\chi(m_2)-D\chi(m_1) \big)(m_4-m_3) \big|
\leq \ & |D\Psi(y_2)-D\Psi(y_1)| K_\phi d_1(m_3,m_4) \\
\leq \ & K_{D\Psi} K_\phi^2 \, d_1(m_1,m_2) d_1(m_3,m_4).
\end{align*}
The estimate \eqref{hypLipDerivative1} follows by taking $m_3=m_1$ and $m_4= m_2$. The estimate \eqref{hypLipDerivative2} follows from: $d_1(m_3,m_4) \leq D$, where $D$ is defined by \eqref{eqDiameter}.
\end{proof}

We finish this paragraph by recalling a density property, stated and proved in \cite[Section 5.3]{Car12}.
For all $R \geq 0$, any function $\chi: \bar{B}_p(R) \rightarrow \R$ that can be written in the form \eqref{eqExample1}, with continuous functions $\phi_1$,...,$\phi_N$, all dominated by $|x|^p$, is called polynomial function on $\bar{B}_p(R)$.

\begin{proposition}
For all $R \geq 0$, the set of polynomial functions on $\bar{B}_p(R)$ is dense in the set of continuous functions (for the $d_1$-distance) on $\bar{B}_p(R)$, that is to say, for all continuous cost function $\chi:\bar{B}_p(R) \rightarrow \R$, for all $\varepsilon>0$, there exists a polynomial function $\tilde{\chi}$ such that $\sup_{m \in \bar{B}_p(R)} |\chi(m)-\tilde{\chi}(m)| \leq \varepsilon$.
\end{proposition}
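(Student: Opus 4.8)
The plan is to recognize the set of polynomial functions on $\bar{B}_p(R)$ as a subalgebra of $C(\bar{B}_p(R))$ and to invoke the Stone--Weierstrass theorem. By Lemma~\ref{lemmaCompactnessProperty}, $\bar{B}_p(R)$ equipped with $d_1$ is a compact metric space, hence a compact Hausdorff space, so Stone--Weierstrass applies: a subalgebra of $C(\bar{B}_p(R))$ that contains the constants and separates points is dense for the uniform norm, which is exactly the assertion to be proved.

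First I would check that the set $\mathcal{A}$ of restrictions to $\bar{B}_p(R)$ of polynomial functions is a subalgebra of $C(\bar{B}_p(R))$. Each element of $\mathcal{A}$ is continuous for $d_1$ by Lemma~\ref{lemmaContinuityDominatedCost}. Given two polynomial functions $\chi_1(m)=\Psi_1\big(\int \phi_1 \dd m,\dots,\int \phi_{N_1}\dd m\big)$ and $\chi_2(m)=\Psi_2\big(\int \psi_1 \dd m,\dots,\int \psi_{N_2}\dd m\big)$, with all $\phi_i,\psi_j$ continuous and dominated by $|x|^p$, both $\chi_1+\chi_2$ and $\chi_1\chi_2$ are again of the form \eqref{eqExample1}: one takes the concatenated family $(\phi_1,\dots,\phi_{N_1},\psi_1,\dots,\psi_{N_2})$, which is still continuous and dominated by $|x|^p$, composed with $\Psi(y,z)=\Psi_1(y)+\Psi_2(z)$, respectively $\Psi(y,z)=\Psi_1(y)\Psi_2(z)$. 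Scalar multiples are handled in the same way, and the constant functions are obtained with $N=1$, $\phi_1\equiv 0$ (trivially dominated by $|x|^p$) and $\Psi$ constant. Hence $\mathcal{A}$ is a subalgebra of $C(\bar{B}_p(R))$ containing the constants.

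Next I would verify that $\mathcal{A}$ separates points. If $m_1\neq m_2$ in $\bar{B}_p(R)$, then since they are distinct Borel probability measures there is a bounded continuous function $\phi:\R^n\rightarrow\R$ with $\int_{\R^n}\phi\dd m_1\neq\int_{\R^n}\phi\dd m_2$. Any bounded continuous $\phi$ is dominated by $|x|^p$ in the sense of \eqref{eqContPropR}: if $|\phi|\leq M$, then for $\varepsilon>0$ it suffices to take $r=(M/\varepsilon)^{1/p}$. Consequently $m\mapsto\int_{\R^n}\phi\dd m$ lies in $\mathcal{A}$ and distinguishes $m_1$ from $m_2$.

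With these three properties in hand, the Stone--Weierstrass theorem gives that $\mathcal{A}$ is dense in $C(\bar{B}_p(R))$ for the supremum norm, which is precisely the statement of the proposition. I do not anticipate a genuine obstacle here; the only points requiring some care are the stability of the class \eqref{eqExample1} under sums and products together with the preservation of the domination property \eqref{eqContPropR} under concatenation of the test functions, and the observation that $(\bar{B}_p(R),d_1)$ is indeed a compact metric space, so that Stone--Weierstrass applies verbatim.
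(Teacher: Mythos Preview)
Your proof is correct and follows exactly the route the paper indicates: the paper's own proof simply notes that $\bar{B}_p(R)$ is $d_1$-compact and invokes Stone--Weierstrass, deferring the verification of the hypotheses to \cite[Section 5.3]{Car12}. You have spelled out precisely those details---that the polynomial functions form a subalgebra containing the constants and separating points---so your argument is a faithful expansion of the paper's sketch.
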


\begin{proof}
Since for all $R \geq 0$, $\bar{B}_p(R)$ is compact for the $d_1$-distance, this result is a direct consequence of the Stone-Weierstrass theorem, see \cite[Section 5.3]{Car12} for details. Note that it is possible to restrict the set of polynomial functions to polynomial functions involving functions $\phi_1$,...,$\phi_N$ which are infinitely many times differentiable, with a compact support.
\end{proof}

For measures in $\R$ and for $r \leq p$, the central moment of order $r$, denoted by $\mu_r$ is a polynomial function of $\mathcal{P}_p$. Indeed,
\begin{align*}
\mu_r(m)=\ & \int_{\R} \Big( x - \int_{\R} y \dd m(y) \Big)^r \dd m(x) \\
=\ & \sum_{i=0}^r \binom{r}{i} \Big(\int_{\R} x^i \dd m(x) \Big) \Big( - \int_{\R} x \dd m(x) \Big)^{r-i}.
\end{align*}
Finally, to obtain a polynomial function, it suffices to set $\phi_i= x^i$ for $i=1,...,r$ and $\Psi(y_1,...,y_r)= (-y_1)^r + \sum_{i=1}^r \binom{r}{i} y_i (-y_1)^{r-i}$.
In particular, for $r = 2$ and $p \geq 2$, the variance is given by taking $\Psi(y_1,y_2)= y_2 - y_1^2$, which is a concave function.

\subsection{Wasserstein distance}

In various domains (in quantum mecanics for example, see \cite[Section 4]{HSBTZ13}, \cite{AB13}), one tries to reach a given probability measure $m^*$. Different cost functions can be employed to measure the distance from a given probability measure $m$ to the prescribed one, $m^*$. Note that the usual distances, such as the $L^2$-norm, requires that $m$ has a density function and are in general not continuous for the $d_1$-distance. The Wasserstein distance defined below has good continuity and differentiability properties.

For $c:\R^n \times \R^n$, consider the optimal transport problem
\begin{equation*}
\chi(m) = \inf_{\pi \in \Pi(m,m^*)} \int_{\R^n \times \R^n} c(x,y) \dd \pi(x,y)
\end{equation*}
and its dual:
\begin{align*}
 & \sup_{\phi \in L^1(m),\, \psi \in L^1(m^*)} \Big\{ \int_{\R^n} \phi(x) \dd m(x) + \int_{\R^n} \psi(x) \dd m^*(x) \Big\} \\
& \text{such that: } \phi(x)+\psi(y) \leq c(x,y),\, \forall (x,y) \in \R^n \times \R^n.
\end{align*}

Let $1 \leq q < p$ and consider the case of the Wasserstein distance $c(x,y)= |y-x|^q$, so that $\chi(m)= d_q^q(m,m^*)$.
Let $(m_k)_{k \in \mathbb{N}}$ be a converging sequence in $\bar{B}_p(R)$ for the $d_1$-distance with limit $\bar{m}$. By theorem \cite[Definition 6.8/Theorem 6.9]{Vil09}, the sequence is weakly converging in $\mathcal{P}(\R^n)$ and by Lemma \ref{lemmaContinuityDominatedCost}, $\int_{\R^n} |x|^q \dd m_k(x) \rightarrow \int_{\R^n} |x|^q \dd m(x)$. Therefore, applying once again \cite[Definition 6.8/Theorem 6.9]{Vil09}, we obtain the convergence for the $d_p$-distance and therefore the continuity of $\chi$.

For the choice $c(x,y)= |y-x|^q$, the conditions of \cite[Theorem 5.10]{Vil09} are satisfied and therefore, the primal and dual problem have the same value and have both an optimal solution. In the dual formulation, $\chi$ is expressed as the supremum of affine cost functions, thus is convex (in the sense of \eqref{eqConvexityChi}). We also have a sub-differentiability property on $\mathcal{P}_q(\R^n)$, given by equation \eqref{eqSubDiffWasserstein}. Let $m_1$ and $m_2 \in \mathcal{P}_q(\R^n)$ and let $(\phi_1,\psi_1)$ be a solution to the dual problem associated with $m_1$. Since for all $x \in \R^n$,
$
\phi_1(x) \leq |x|^q - \psi(0),
$
we have that $\int_{\R^n} \phi_1 \dd m_2 \in [-\infty,+\infty)$ and moreover, $\phi_1 \in L^1(\mu_2)$ if and only if $\int_{\R^n} \phi_1 \dd m_2 > - \infty$. Therefore, we obtain:
\begin{equation} \label{eqSubDiffWasserstein}
\chi(m_2) \geq \int_{\R^n} \phi_1 \dd m_2 + \int_{\R^n} \psi_1 \dd m^* = \chi(m_1) + \int_{\R^n} \phi_1 \dd( m_2- m_1).
\end{equation}
Indeed, if $\int_{\R^n} \phi_1 \dd m_2 = -\infty$, the inequality is trivial, otherwise, $\phi_1 \in L^1(m_2)$, thus $\phi_1$ is sub-optimal in the dual problem (associated with $m_2$).

\subsection{Conditional value at risk}

A popular cost functional in stochastic optimization is the Conditional Value at Risk (CVaR). Note that the CVaR belongs to the important class of coherent risk measures \cite{ADEH99}. The CVaR of a random variable in $\R$ (typically modelling losses) can be easily understood when the distribution has a density. Given a probability level $\beta \in (0,1)$, one has to define first the value at risk (VaR) as the smallest value $\alpha \in \R$ such that with a probability greater to $\beta$, the losses do not exceed $\alpha$. If the distribution has a density, then
the CVaR is the conditional expectation of the losses, under the condition that they exceed $\alpha$.

In this subsection, we describe the CVaR of a random variable as the value of an optimal transportation problem involving its probability distribution. This enables to derive easily a well-known formula \cite[Theorem 1]{RU00}, to prove the concavity and the continuity of the CVaR, and to prove a super-differentiability property.

Let $m \in \mathcal{P}(\R)$ be a probability measure on $\R$ and let $\Omega$ be a set with exactly two elements that we arbitrarily denote $\omega_0$ and $\omega_1$. Let $\beta \in (0,1)$, let $\nu \in \mathcal{P}(\Omega)$ be the probability measure on $\Omega$ defined by: $\nu(\{\omega_0\})= \beta$, $\nu(\{ \omega_1 \})= 1-\beta$.
We define the set of transportation plans $\Pi(m,\nu)$ between $\mu$ and $\nu$ as:
\begin{align*}
\Pi(m,\nu)= \Big\{ \pi \in \mathcal{P}(\R \times \Omega) \,|\, & \pi(A \times \Omega)= m(A),\ \forall A \in \sigma(\R), \\
& \pi(\R \times \omega_0)= \beta,\quad \pi(\R \times \omega_1)= 1-\beta \Big\},
\end{align*}
where $\sigma(\R)$ is the $\sigma$-algebra of Borel subsets of $\R$.
We also set:
\begin{equation*}
c:(x,\omega) \in \R \times \Omega \mapsto \begin{cases}
\begin{array}{cl}
0 & \text{if }\omega= \omega_0 \\
x & \text{if }\omega= \omega_1.
\end{array}
\end{cases}
\end{equation*}
The CVaR (with level $\beta$) of the probability measure $m$ is now defined by:
\begin{equation} \label{eqPrimaCVaR}
\text{CVaR}(m)= \frac{1}{1-\beta} \sup_{\pi \in \Pi(m,\nu)} \Big\{ \int_{\R \times \Omega} c(x,\omega) \dd \pi(x,\omega) \Big\}.
\end{equation}
The dual problem is therefore:
\begin{align*}
& \inf_{\begin{subarray}{c} \phi \in L^1(m) \\ \psi \in L^1(\nu) \end{subarray} }
\Big\{ \int_{\R} \phi(x) \dd m(x) + \int_{\Omega} \psi(\omega) \dd \nu(\omega) \Big\}, \\
& \qquad \text{s.t. } \phi(x) + \psi(\omega) \geq c(x,\omega), \forall (x,\omega) \in \R \times \Omega.
\end{align*}
Let us analyse the set of feasible functions $(\phi,\psi)$. It is sufficient to describe $\psi$ as a vector of $\R^2$. We therefore write $\psi=(\psi_0,\psi_1)$, where $\psi_0= \psi(\omega_0)$ and $\psi_1= \psi(\omega_1)$. For a given function $\psi \in \R^2$, it is sufficient to consider the function $\phi$ defined by:
\begin{equation*}
\phi(x)= \max_{i=0,1} \big\{ c(x,\omega_i) - \psi_i \big\} =  \max (x-\psi_1,-\psi_0)
\end{equation*}
in the dual problem.
We recover then the formula of \cite{RU00}:
\begin{align}
  & (1-\beta) \text{CVaR}(m) \notag \\
= & \inf_{(\psi_0,\psi_1) \in \R^2} \Big\{ \int_{\R} \max(x-\psi_1,-\psi_0) \dd m(x) + \beta \psi_0 + (1-\beta) \psi_1 \Big\} \notag \\
= & \inf_{(\psi_0,\psi_1) \in \R^2} \Big\{ \int_{\R} \max(x-\psi_1+\psi_0,0) \dd m(x) + (1-\beta)(\psi_1-\psi_0) \Big\} \notag \\
= & \ \inf_{\psi \in \R} \Big\{ \int_{\R} (x-\psi)_+ \dd m(x) + (1-\beta) \psi \Big\}. \label{eqDualCVaR}
\end{align}
Let us define:
\begin{equation*}
A(m)= \Big\{ \alpha \in \R \,|\, m \big( (-\infty,\alpha] \big) \geq \beta, \, m\big( [\alpha,+\infty) \big) \geq 1-\beta \Big\},
\end{equation*}
The set $A(m)$ is a closed and bounded interval. Let us also define:
\begin{equation*}
\alpha_-(m) = \min A(m) \quad \text{and} \quad \alpha_+(m) = \max A(m).
\end{equation*}
For a given $m \in \mathcal{P}(\R)$, it is easy to show the existence and uniqueness of non-negative measures $m_-$ and $m_+$ on $\R$, having a support included respectively in $(-\infty,\alpha_-]$ and $[\alpha_+,+\infty)$, and such that $m\big( (-\infty,\alpha_-] \big)= \beta$ and $m\big( [\alpha_+,+\infty) \big)= 1-\beta$.

\begin{lemma}
The unique solution to the primal problem \eqref{eqPrimaCVaR} is given by:
\begin{equation*}
\pi= (m_- \times \delta_{\omega_0}) + (m_+ \times \delta_{\omega_1}).
\end{equation*}
The interval $A(m)$ is the set of optimal solutions to the dual problem \eqref{eqDualCVaR}.
\end{lemma}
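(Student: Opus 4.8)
The plan is to verify that the proposed $\pi$ and each $\alpha\in A(m)$ form a primal--dual optimal pair for \eqref{eqPrimaCVaR}--\eqref{eqDualCVaR}, then to describe the whole dual solution set using convexity of the one-variable dual objective, and finally to derive uniqueness of the primal optimizer from complementary slackness applied at the two endpoints of $A(m)$. Recall that $A(m)=[\alpha_-,\alpha_+]$ is a nonempty closed bounded interval, and that the measures $m_\pm$ introduced above satisfy $m_-+m_+=m$, $m_-(\R)=\beta$, $m_+(\R)=1-\beta$, with $\mathrm{supp}\,m_-\subseteq(-\infty,\alpha_-]$ and $\mathrm{supp}\,m_+\subseteq[\alpha_+,+\infty)$. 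Throughout one assumes $\int_{\R}x_+\dd m(x)<+\infty$ (equivalently $\text{CVaR}(m)<+\infty$), as holds in the setting of the paper.

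First I would check admissibility and the value of $\pi$. Since $\Omega$ has only two points, the fibres of $\pi$ over $\omega_0$ and $\omega_1$ are $m_-$ and $m_+$, so $\pi(A\times\Omega)=m_-(A)+m_+(A)=m(A)$ and the two remaining marginal conditions hold; hence $\pi\in\Pi(m,\nu)$. Its cost is $\int c\dd\pi=\int c(x,\omega_0)\dd m_-(x)+\int c(x,\omega_1)\dd m_+(x)=\int_{\R}x\dd m_+(x)$, since $c(\cdot,\omega_0)\equiv0$. Next, for a fixed $\alpha\in A(m)$ I would take $\psi=\alpha$ in \eqref{eqDualCVaR}, corresponding to the feasible optimal-transport dual data $\phi(x)=(x-\alpha)_+$, $\psi_0=0$, $\psi_1=\alpha$ (feasibility being immediate from $\phi\ge0$ and $\phi(x)+\alpha\ge x$). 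Its dual cost is $\int_{\R}(x-\alpha)_+\dd m(x)+(1-\beta)\alpha$; splitting the integral along $m=m_-+m_+$ and using $x\le\alpha_-\le\alpha$ on $\mathrm{supp}\,m_-$ (so $(x-\alpha)_+=0$) and $x\ge\alpha_+\ge\alpha$ on $\mathrm{supp}\,m_+$ (so $(x-\alpha)_+=x-\alpha$), this equals $\int_{\R}(x-\alpha)\dd m_+(x)+(1-\beta)\alpha=\int_{\R}x\dd m_+(x)$, i.e.\ exactly the cost of $\pi$. Weak duality (immediate from the constraint $\phi(x)+\psi(\omega)\ge c(x,\omega)$) then forces $\pi$ to be optimal for \eqref{eqPrimaCVaR}, every $\alpha\in A(m)$ to be optimal for \eqref{eqDualCVaR}, and the gap to vanish, with common value $(1-\beta)\text{CVaR}(m)=\int_{\R}x\dd m_+(x)$.

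For the reverse inclusion, that the dual solution set is contained in $A(m)$, I would argue directly with the convex objective $g(\psi):=\int_{\R}(x-\psi)_+\dd m(x)+(1-\beta)\psi$ of \eqref{eqDualCVaR}. Its one-sided derivatives are $g'_-(\psi)=(1-\beta)-m([\psi,+\infty))$ and $g'_+(\psi)=(1-\beta)-m((\psi,+\infty))$, the differentiation under the integral sign being justified by monotone convergence. Hence $\psi$ minimizes $g$ if and only if $g'_-(\psi)\le0\le g'_+(\psi)$, that is, $m([\psi,+\infty))\ge1-\beta$ and $m((-\infty,\psi])\ge\beta$, which is exactly the definition of $A(m)$.

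Finally, for uniqueness of the primal optimizer, let $\pi'$ be any solution of \eqref{eqPrimaCVaR}. Since $\alpha_-$ and $\alpha_+$ lie in $A(m)$ they are dual-optimal, and since the gap vanishes, complementary slackness forces the nonnegative integrand $\phi^\pm(x)+\psi^\pm(\omega)-c(x,\omega)$ to vanish $\pi'$-a.e., where $(\phi^\pm,\psi^\pm)$ denotes the dual data above with $\alpha=\alpha_\pm$. For $\alpha_-$ this concentrates $\pi'$ on $\{(x,\omega_0):x\le\alpha_-\}\cup\{(x,\omega_1):x\ge\alpha_-\}$, and for $\alpha_+$ on $\{(x,\omega_0):x\le\alpha_+\}\cup\{(x,\omega_1):x\ge\alpha_+\}$; intersecting, $\pi'$ is concentrated on $\{(x,\omega_0):x\le\alpha_-\}\cup\{(x,\omega_1):x\ge\alpha_+\}$. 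Writing $\pi'=\mu_0\times\delta_{\omega_0}+\mu_1\times\delta_{\omega_1}$, the marginal constraints give $\mu_0+\mu_1=m$, $\mu_0(\R)=\beta$, $\mu_1(\R)=1-\beta$, $\mathrm{supp}\,\mu_0\subseteq(-\infty,\alpha_-]$ and $\mathrm{supp}\,\mu_1\subseteq[\alpha_+,+\infty)$, so by the uniqueness of the decomposition $m=m_-+m_+$ recalled above, $\mu_0=m_-$ and $\mu_1=m_+$, i.e.\ $\pi'=\pi$. The main (though mild) obstacle I anticipate is this uniqueness step: one must handle possible atoms of $m$ at $\alpha_-$ and $\alpha_+$, which is precisely why both endpoints of $A(m)$ are invoked rather than a single quantile; the interchange of derivative and integral for $g$ is the other point that requires (routine) care.
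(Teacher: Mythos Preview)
Your proof is correct and actually more complete than the paper's: the paper explicitly restricts itself to the second assertion (``We only prove that $A(m)$ is the set of optimal solutions of the dual problem'') and leaves the primal uniqueness to the reader, whereas you supply a clean argument for both.

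The routes differ in spirit. For the dual characterization, the paper computes directly: it expands $D(\alpha)-D(\alpha_-)$ using the split $m=m_-+m_+$ and shows the difference is $\int(\alpha-x)_+\dd m_+$ for $\alpha\ge\alpha_-$ and $\int(x-\alpha)_+\dd m_-$ for $\alpha\le\alpha_-$, then reads off the sign. You instead invoke convex analysis: compute the one-sided derivatives $g'_\pm(\psi)$ and recognize the optimality condition $g'_-\le 0\le g'_+$ as the defining inequalities of $A(m)$. Your approach is shorter and conceptually transparent; the paper's is more hands-on but self-contained without subdifferential calculus. For the primal part, your complementary-slackness argument at the two endpoints $\alpha_\pm$ is the natural way to handle possible atoms at the quantiles, and it dovetails nicely with the uniqueness of the decomposition $m=m_-+m_+$ already asserted in the paper; this is exactly the right idea and the paper offers no alternative.
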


\begin{proof}
We only prove that $A(m)$ is the set of optimal solutions of the dual problem.
We denote by $D(\alpha)$ the dual criterion:
\begin{equation*}
D(\alpha) = \int_{\R} (x-\alpha)_+ \dd m(x) + (1-\beta) \alpha.
\end{equation*}
For $\alpha \geq \alpha_-$, it holds:
\begin{align*}
D(\alpha)=\ & \int_{\R} (x-\alpha)_+ \dd m_+(x) + (1-\beta) \alpha \\
=\ & \int_{\R} \big( (x-\alpha)_+  + (\alpha-\alpha_-) \big) \dd m_+(x) + (1-\beta) \alpha_- \\
=\ & \int_{\R} (x-\alpha_-) \dd m_+(x) + (1-\beta) \alpha_- + \int_{\R} (\alpha-x)_+ \dd m_+(x) \\
=\ & D(\alpha_-) + \int_{\R} (\alpha-x)_+ \dd m^+(x).
\end{align*}
We let the reader check that if $\alpha \in [\alpha_-,\alpha_+]$, then $\int_{\R} (\alpha-x)_+ \dd m^+(x)= 0$, and if $\alpha > \alpha_+$, then $\int_{\R} (\alpha-x)_+ \dd m^+(x)> 0$.

Now, let $\alpha \leq \alpha_-$. Using $x-\alpha= (x-\alpha)_+ - (\alpha-x)_+$, we obtain:
\begin{align*}
D(\alpha)=\ & \int_{\R} (x-\alpha) \dd m_+(x) + (1-\beta) \alpha_- + \int_{\R} (x-\alpha)_+ \dd m^-(x) \\
=\ & \int_{\R} (x-\alpha_-) \dd m_+(x) + (1-\beta) \alpha_- + \int_{\R} (x-\alpha)_+ \dd m^-(x) \\
=\ & D(\alpha_-) + \int_{\R} (x-\alpha)_+ \dd m^-(x).
\end{align*}
Finally, the reader can check that for $\alpha< \alpha_-$, $\int_{\R} (x-\alpha)_+ \dd m^-(x) < 0$.
\end{proof}

In the dual formulation, the CVaR is expressed as the infimum of affine functions. Therefore, it is concave. Observe that for all $\psi \in \R$, $x \mapsto (x-\psi)_+$ is 1 Lipschitz, therefore, for all $m_1$ and $m_2 \in \mathcal{P}_1(\R)$,
\begin{align*}
(1-\beta) \text{CVaR}(m_2) \leq \ &
\int_{\R} (x-\psi)_+ \dd m_2(x) + (1-\beta) \psi \\
\leq \ & \int_{\R} (x-\psi)_+ \dd m_1(x) + (1-\beta) \psi + d_1(m_1,m_2).
\end{align*}
Minimizing with respect to $\psi$, we obtain that:
\begin{equation*}
\text{CVaR}(m_2)-\text{CVaR}(m_1) \leq \frac{1}{1-\beta} \, d_1(m_1,m_2).
\end{equation*}
Exchanging $m_1$ and $m_2$ in the previous inequality, we obtain the Lipschitz-continuity of the CVaR. Similarly to \eqref{eqSubDiffWasserstein}, $\chi$ is super-differentiable in the following sense:
\begin{equation}
\chi(m_2) \leq \chi(m_1) + \frac{1}{1-\beta} \int_{\R} (x-\psi)_+ \dd (m_2(x)-m_1(x)),
\end{equation}
for all $\psi \in A(m_1)$.

\subsection{Integral of interactions}

Given a function $\phi:\R^{2n} \rightarrow \R$, we can define:
\begin{equation*}
\chi(m)= \int_{\R^{2n}} \phi(x,y) \dd m(x) \dd m(y).
\end{equation*}
A possible choice of $\phi$ is the following: $\phi(x,y)= \varphi(|y-x|)$ : if $\varphi$ is increasing, the diffusion of the state variable is penalised. One can for example easily check that:
\begin{equation*}
\int_{\R^{2n}} \frac{1}{2} | y-x |^2 \dd m(x) \dd m(y) =  \int_{\R^n} \Big( y- \int_{\R^n} x \dd m(x) \Big)^2 \dd m(y)= \text{Var}(m).
\end{equation*}

Assume that for all $\varepsilon>0$, there exists $r>0$ such that if $|x| \geq r$ or $|y| \geq r$, then $|\phi(x,y)| \leq \varepsilon \max(|x|^p,|y|^p)$. Let $(m_k)_{k \in \mathbb{N}}$ be a converging sequence in $\bar{B}_p(R)$ for the  $d_1$-distance with limit $m$. We let the reader check that the sequence $(m_k \times m_k)_k$ converges to $(m \times m)$ for the $d_1$-distance of $\R^{2n}$. Applying Lemma \ref{lemmaContinuityDominatedCost} (in $\R^{2n}$), the continuity of $\chi$ follows.
The derivative is given by:
\begin{equation}
D\chi(m,x)= \int_{\R^m} \big( \phi(x,y) + \phi(y,x) \big) \dd m(y).
\end{equation}
Assumption \ref{hypDiff} holds, as a consequence of the following identity: for all $m_1$ and $m_2 \in \mathcal{P}_p(\R^n)$, for all $\theta \in [0,1]$,
\begin{align*}
& \chi((1-\theta)m_1 + \theta m_2) \notag \\
=\ & \chi(m_1) + \theta \int_{\R^{2n}} \big(\phi(x,y) + \phi(y,x)\big) \dd m_1(x)(\text{d} m_2(y)-\dd m_1(y)) \\
& \qquad + \theta^2 \int_{\R^{2n}} \phi(x,y) (\text{d} m_2(x)- \dd m_1(x))(\text{d} m_2(y)-\dd m_1(y)).
\end{align*}

\section{Numerical results} \label{section5}

\begin{figure}[p]
\label{pageTestCase1}
\fbox{
\begin{minipage}[c]{0.95\linewidth}

\vspace{3mm}

\centering{\textbf{Test Case 1}} \\
\vspace{5mm}
\begin{tabular}{|c||cc|ccc|} \hline
                 & \multicolumn{2}{c|}{\text{Algorithm 1}} & \multicolumn{3}{c|}{\text{Algorithm 2}} \\ \hline
$\ell$ & Cost: $\chi(m^\ell)$ & Criterion: $\varepsilon_{\ell}$ & Cost: $\chi(m^\ell)$ & Criterion: $\varepsilon_{\ell}$ & \phantom{$\big|^|$}$q$\phantom{$\big|^|$} \\ \hline
0  & 0.8742 & 0.4249 & 0.8742 & 0.4249 & 0 \\
5  & 0.5558 & 0.0492 & 0.8116 & 0.3441 & 5 \\
10 & 0.5510 & 0.0433 & 0.5335 & 0.0137 & 10 \\
15 & 0.5461 & 0.0554 & 0.5330 & 0.0062 & 19 \\
20 & 0.5367 & 0.0296 & 0.5220 & 0.0102 & 31 \\
25 & 0.5333 & 0.0158 & 0.5206 & 0.0012 & 42 \\
30 & 0.5313 & 0.0202 & 0.5204 & 0.0003 & 53 \\
35 & 0.5300 & 0.0107 & 0.5203 & 0.0003 & 62 \\
40 & 0.5284 & 0.0320 & 0.5203 & 0.0003 & 74 \\
45 & 0.5274 & 0.0082 & 0.5203 & 0.0003 & 83 \\
50 & 0.5267 & 0.0252 & 0.5203 & 0.0003 & 94 \\
55 & 0.5241 & 0.0097 & 0.5203 & 0.0011 & 103 \\
60 & 0.5205 & 0.0026 & 0.5203 & 0.0011 & 113 \\
65 & 0.5204 & 0.0018 & 0.5203 & 0.0011 & 123 \\
70 & 0.5204 & 0.0018 & 0.5203 & 0.0011 & 133 \\
\hline
\end{tabular}
\caption{Convergence results}
\label{figCaseTransportConvergence}

   \begin{minipage}[c]{.47\linewidth}
      \includegraphics[width=\textwidth]{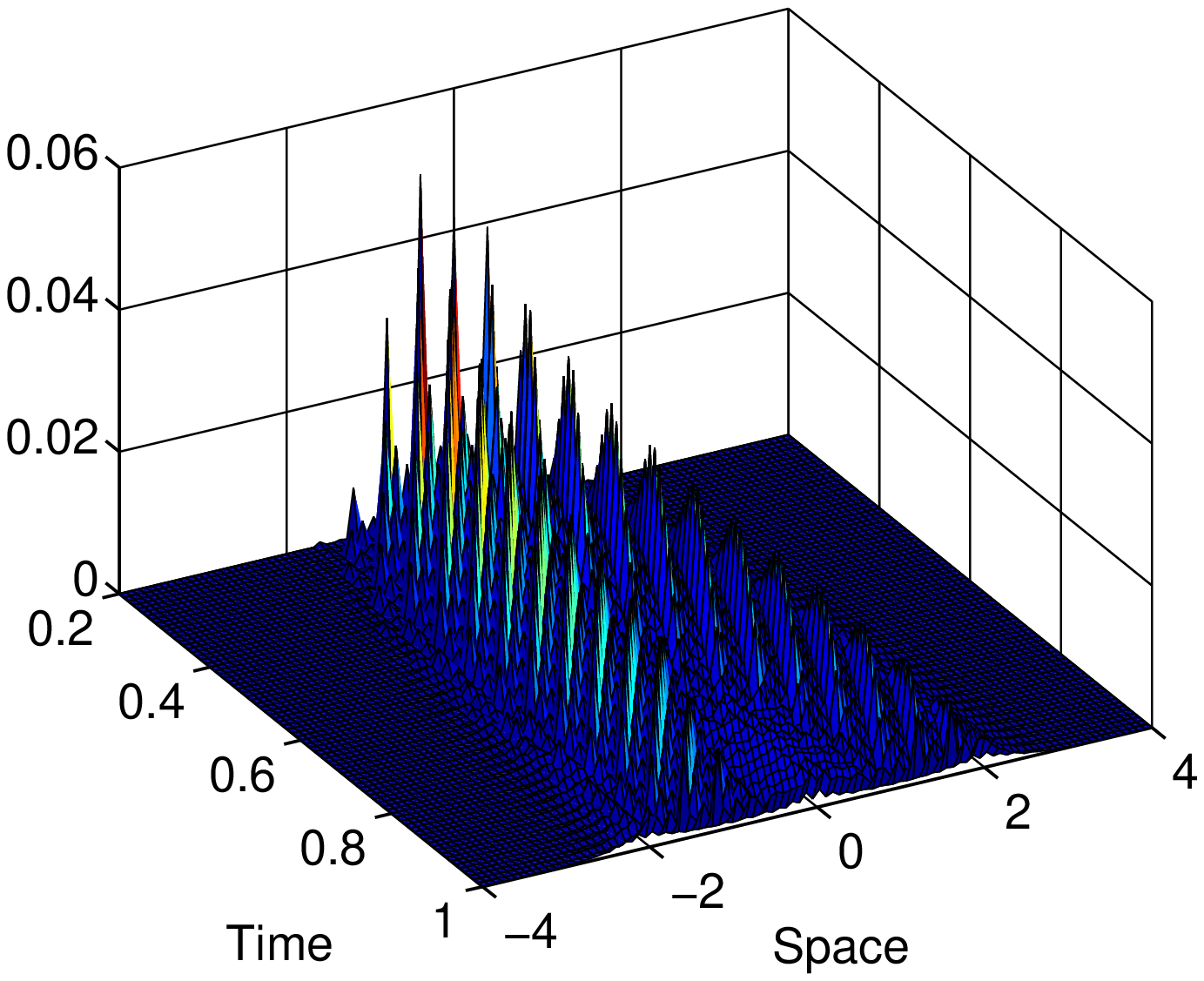} 
      \caption{\newline Probability distribution}
      \label{figTransport1}
      \vspace{2mm}
      \includegraphics[width=\textwidth]{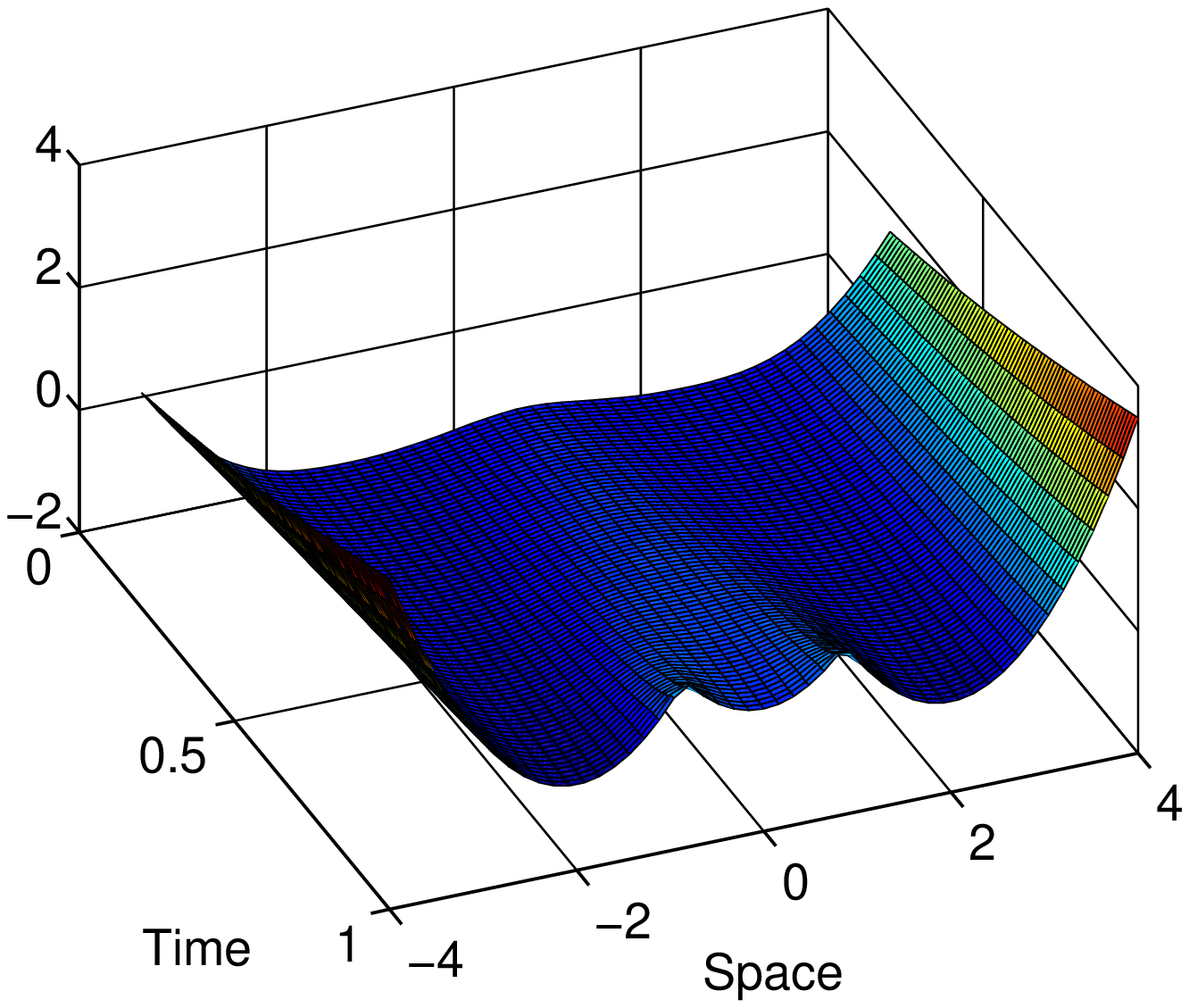} 
      \caption{\newline Value function}
      \label{figTransport3}
   \end{minipage} \hfill
   \begin{minipage}[c]{.47\linewidth}
   \includegraphics[width=\textwidth]{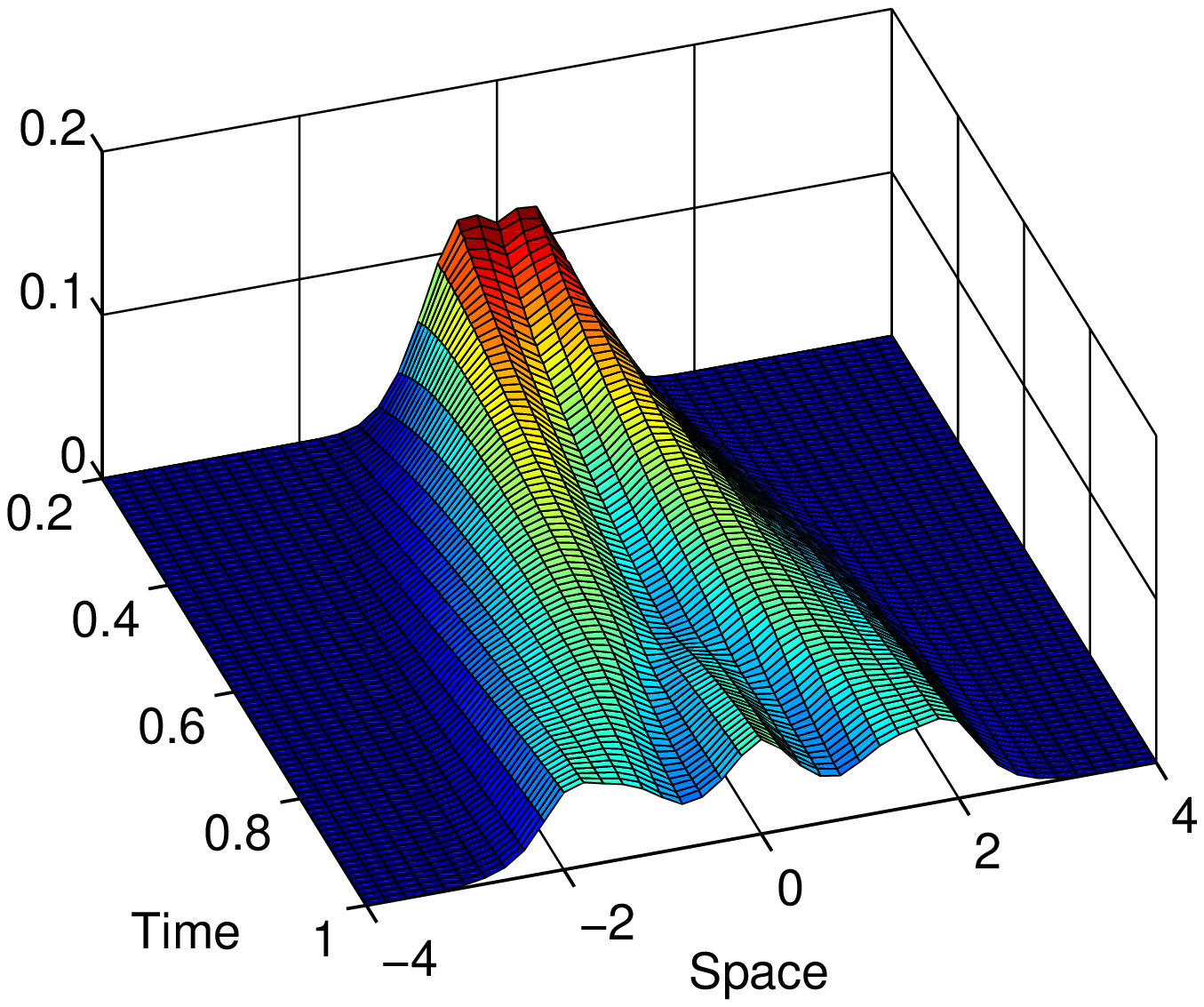}
      \caption{\newline Regularized  distribution}
      \label{figTransport2}
      \vspace{2mm}
      \includegraphics[width=\textwidth]{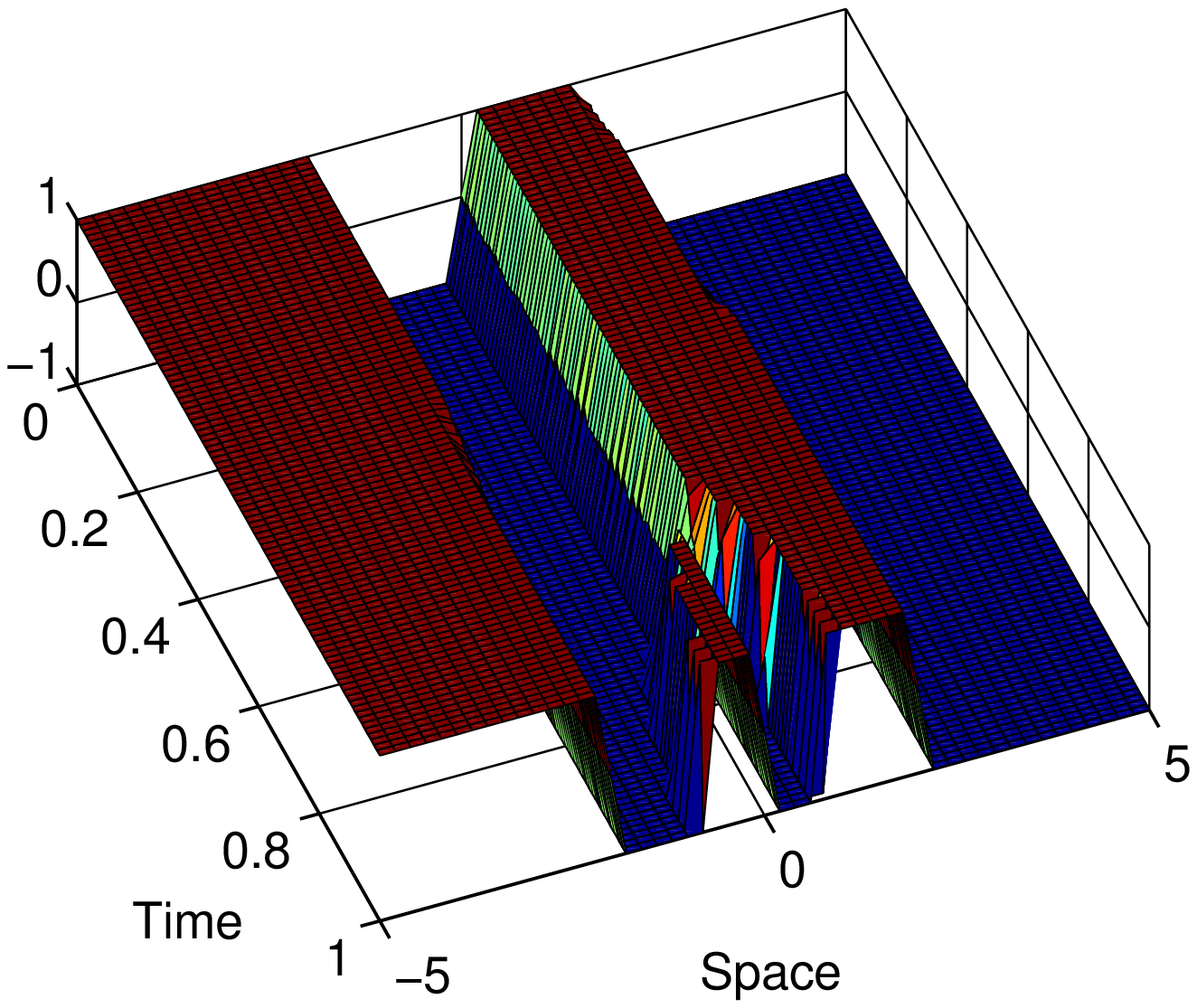}
      \caption{\newline Optimal control}
      \label{figTransport4}
   \end{minipage}
   \vspace{3mm} 
\end{minipage}
}
\end{figure}

\begin{figure}[p]
\fbox{
   \begin{minipage}[c]{.45\linewidth}
   \vspace{5mm}   	
   	\centering{\textbf{Test Case 2}}
      \includegraphics[width=\textwidth]{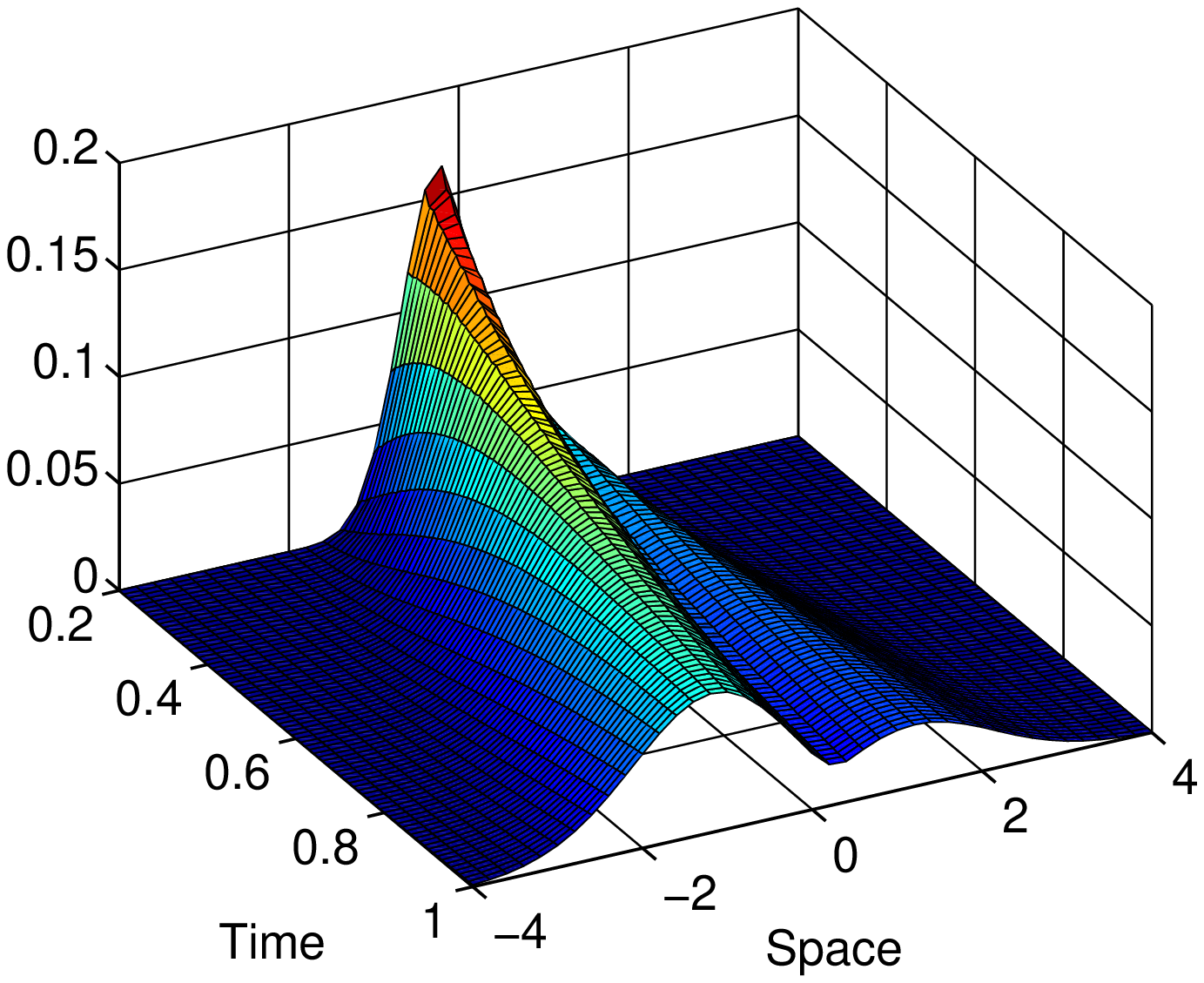} 
      \caption{\newline Regularized distribution}
      \label{figMixConvexe1}
      \vspace{2mm}
      \includegraphics[width=\textwidth]{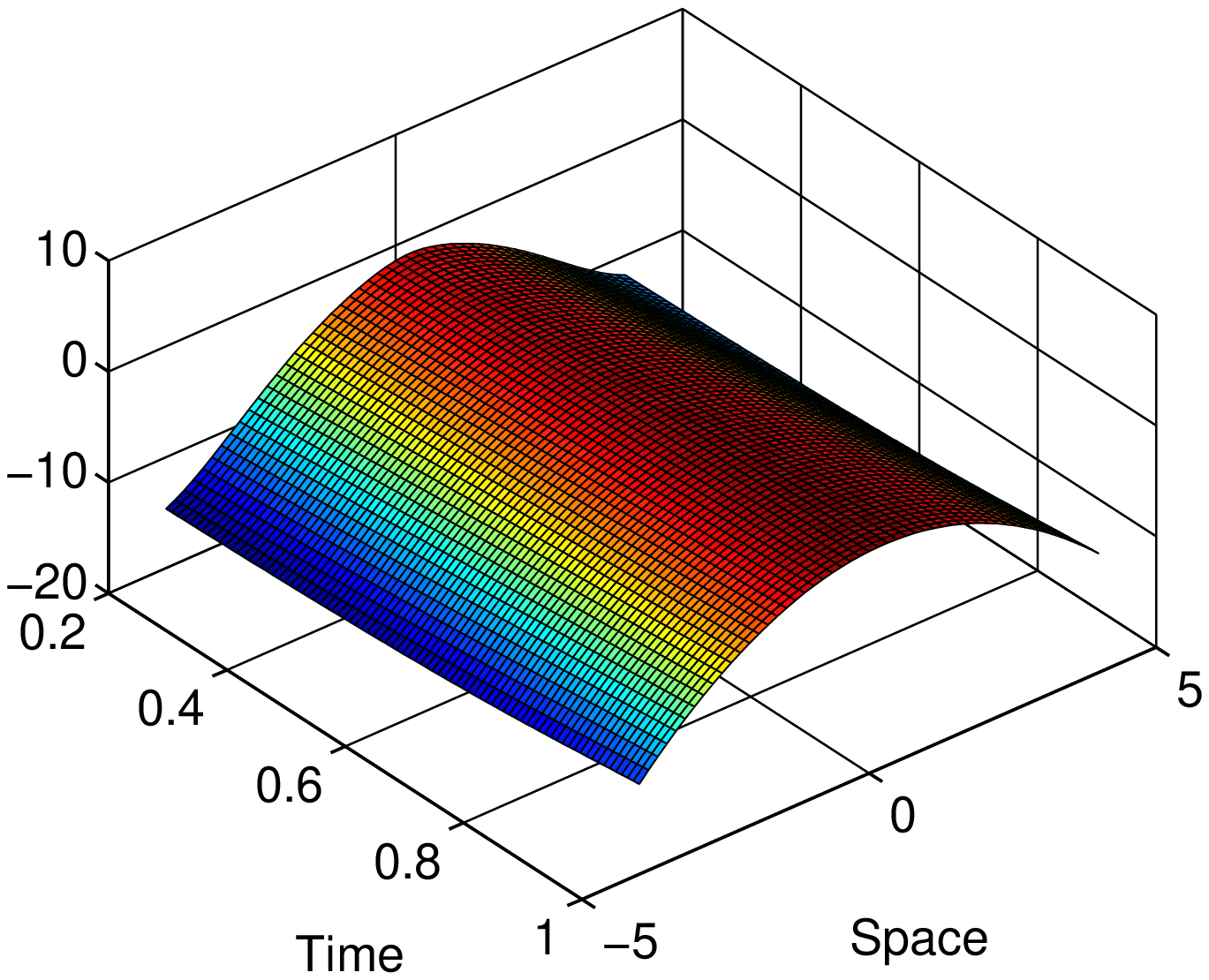}
      \caption{Value function}
      \label{figMixConvexe2}
      \vspace{2mm}
      \includegraphics[width=\textwidth]{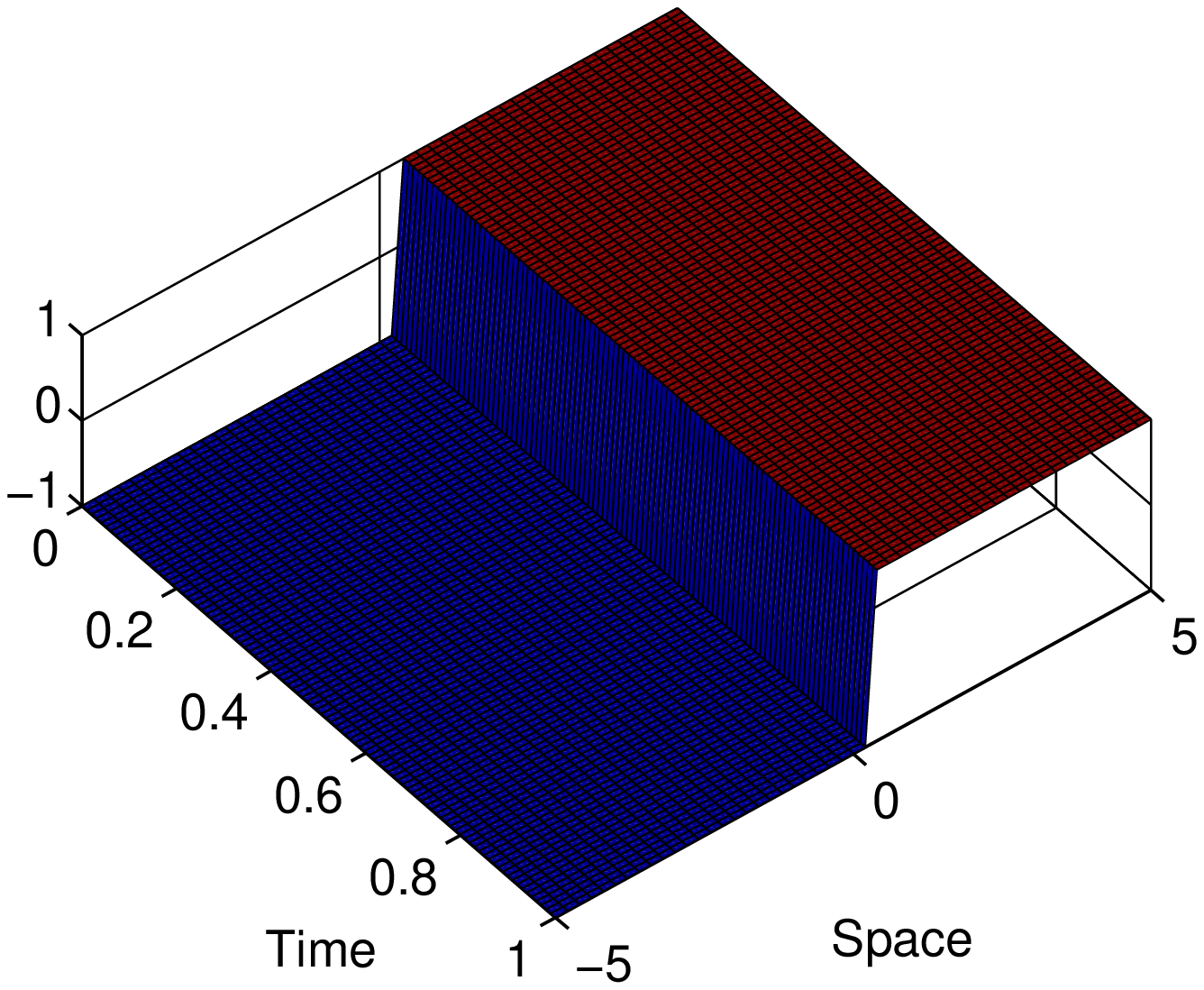}
      \caption{\newline Optimal Control}
      \label{figMixConvexe3}
      \vspace{4mm}
   \end{minipage} } \hfill
\fbox{
   \begin{minipage}[c]{.45\linewidth}
   \vspace{5mm}   
   \centering{\textbf{Test Case 3}}
      \includegraphics[width=\textwidth]{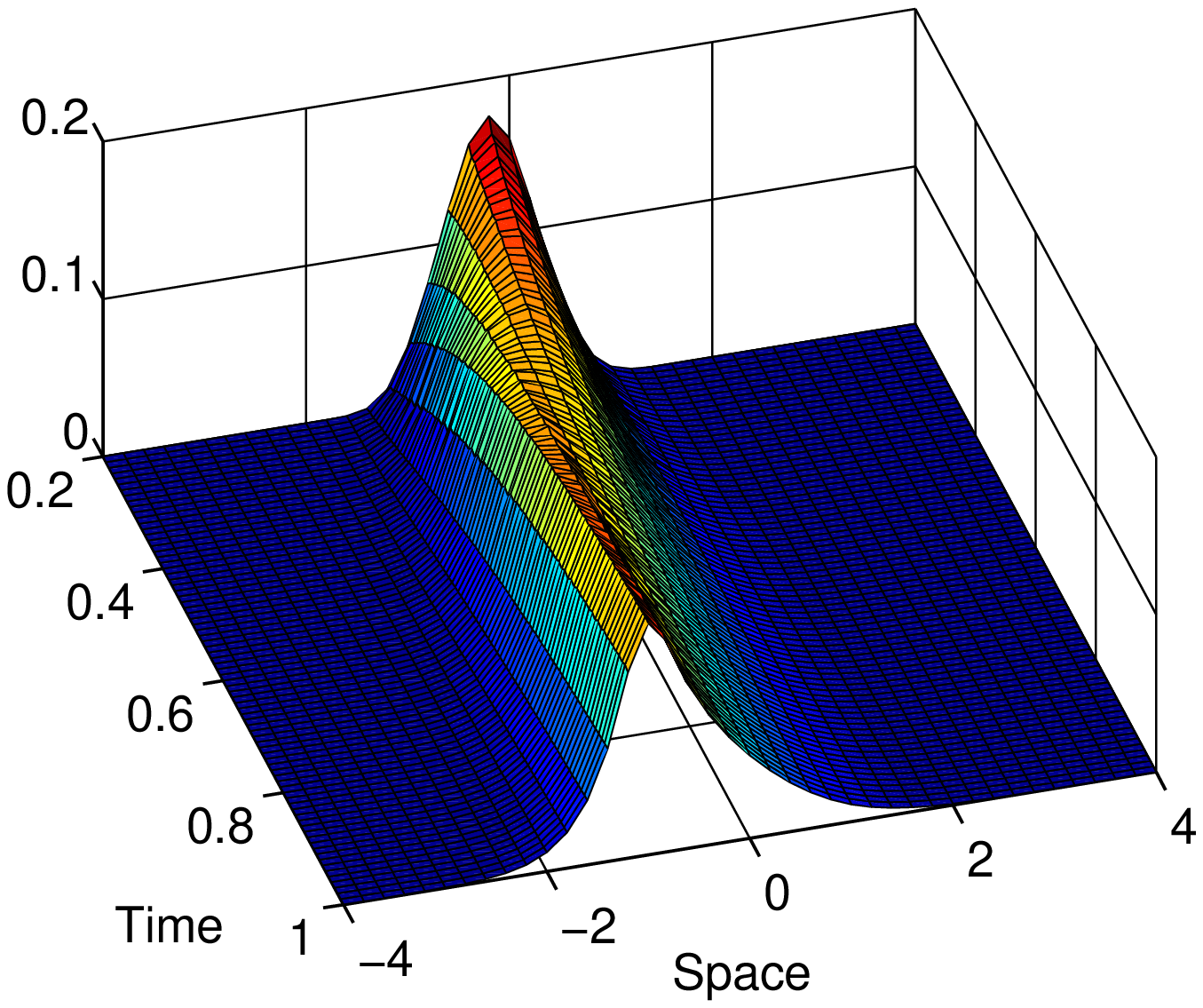} 
      \caption{\newline Regularized distribution}
      \label{figMixConcave1}
      \vspace{2mm}
      \includegraphics[width=\textwidth]{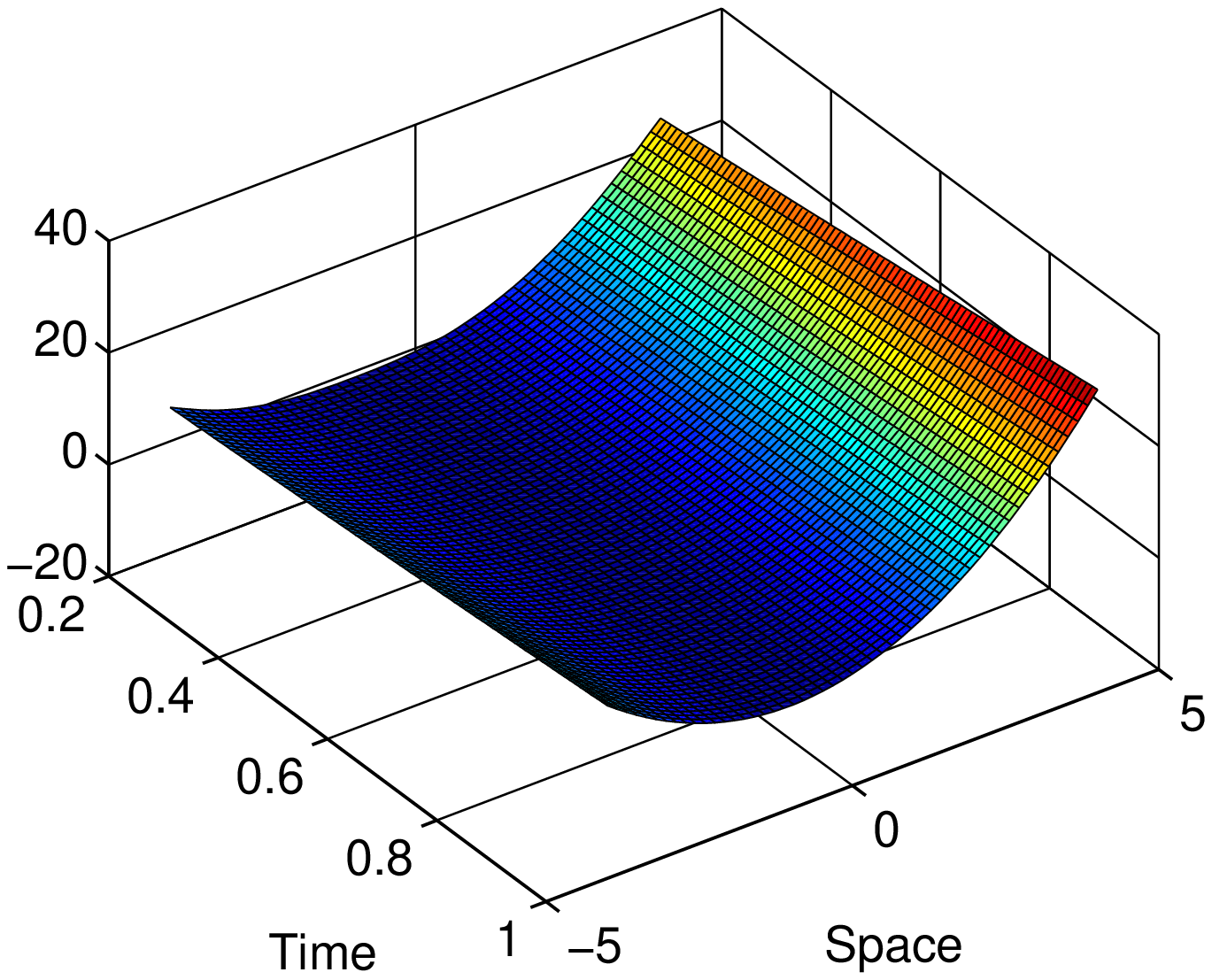}
      \caption{Value function}
      \label{figMixConcave2}
      \vspace{2mm}
      \includegraphics[width=\textwidth]{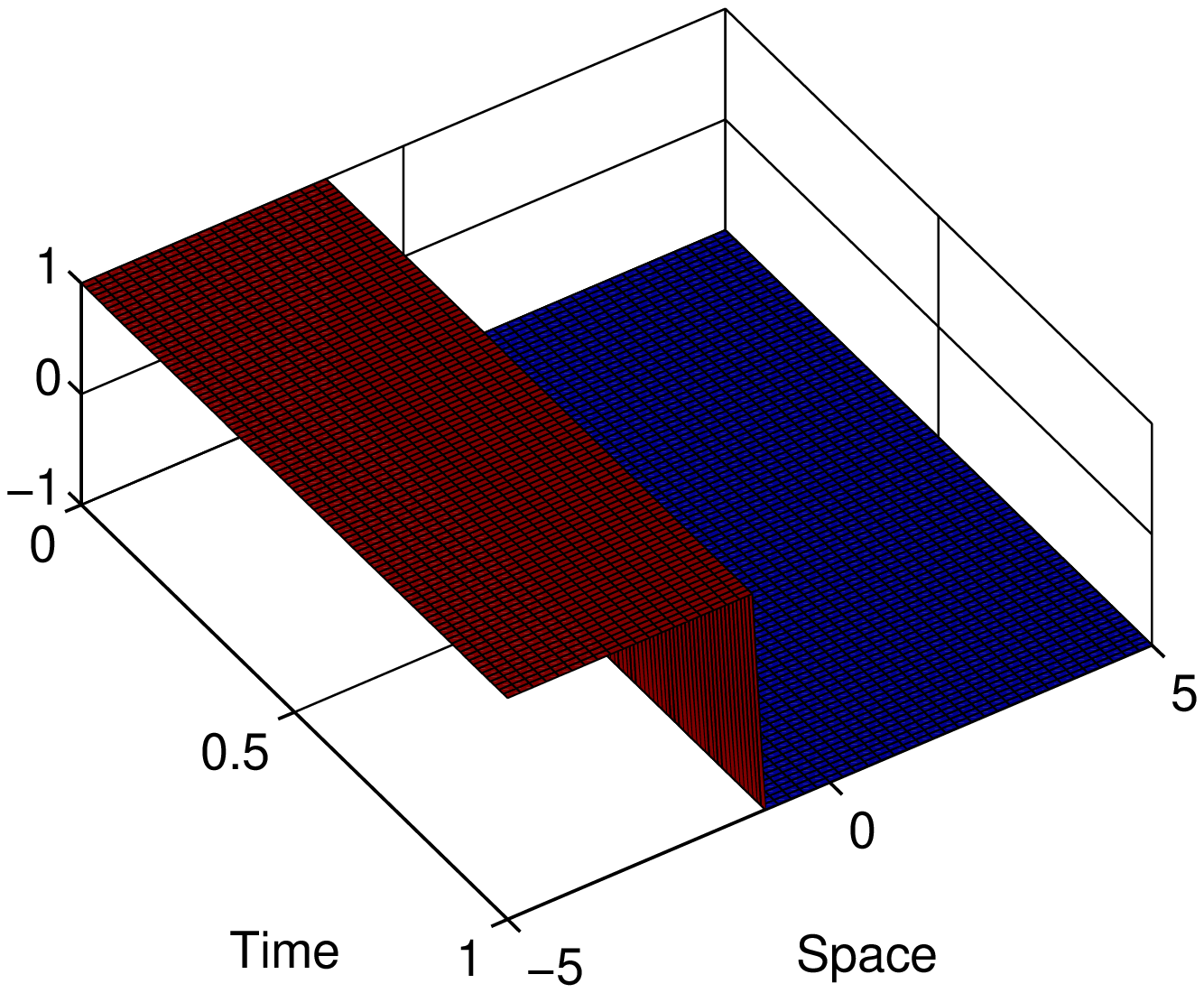}
      \caption{\newline Optimal Control}
      \label{figMixConcave3}
      \vspace{4mm}
   \end{minipage}
   }
\end{figure}

\begin{figure}[htb]
\centering
\begin{tabular}{|c||cc|ccc||cc|} \hline
 & \multicolumn{5}{c||}{\text{Test Case 2}} & \multicolumn{2}{c|}{\text{Test Case 3}} \\ \hline
 & \multicolumn{2}{c|}{\text{Algorithm 1}} & \multicolumn{3}{c||}{\text{Algorithm 2}} & \multicolumn{2}{c|}{\text{Algorithm 1}}\\ \hline
\text{$\ell$} & \text{$\chi(m^\ell)$} & \text{$\varepsilon_\ell$} & \text{$\chi(m^\ell)$} & \text{$\varepsilon_\ell$} & \text{\phantom{$\big|^l$}$q$\phantom{$\big|^l$}} & \text{$\chi(m^\ell)$} & \text{$\varepsilon_\ell$} \\ \hline
0   & -2      & 2,1           & -2      & 2,1           & 0  & 2      & 8,6.$10^{-1}$  \\
2   & -3,4572 & 1,0.$10^{-1}$ & -2,0459 & 2,0           & 2  & 0,7774 & 1,5.$10^{-2}$  \\
4   & -3,5109 & 2,6.$10^{-2}$ & -2,2491 & 1,6           & 4  & 0,7430 & 1,9.$10^{-3}$  \\
6   & -3,5323 & 2,5.$10^{-3}$ & -3,1521 & 4,0.$10^{-1}$ & 6  & 0,7390 & 2,4.$10^{-4}$  \\
8   & -3,5343 & 2,3.$10^{-4}$ & -3,5163 & 1,8.$10^{-2}$ & 8  & 0,7385 & 1,37.$10^{-4}$ \\
10  & -3,5346 & $\approx 0  $ & -3,5334 & 1,1.$10^{-3}$ & 10 & 0,7384 & $\approx 0$    \\
12  & -       & -             & -3.5345 & 3,9.$10^{-5}$ & 12 & -      & -              \\
14  & -       & -             & -3.5346 & 3,1.$10^{-9}$ & 14 & -      & -              \\ \hline
\end{tabular}
\caption{Convergence results of test cases 2 and 3}
\label{figConvResultsTest23}
\end{figure}

\begin{figure}[htb]
\fbox{
\begin{minipage}[c]{0.95\linewidth}
\vspace{3mm}
\centering{\textbf{Test Case 4}}
\vspace{3mm}

   \begin{minipage}[c]{.45\linewidth}
   
\vspace{2mm}   
   
   	  \includegraphics[width=\textwidth]{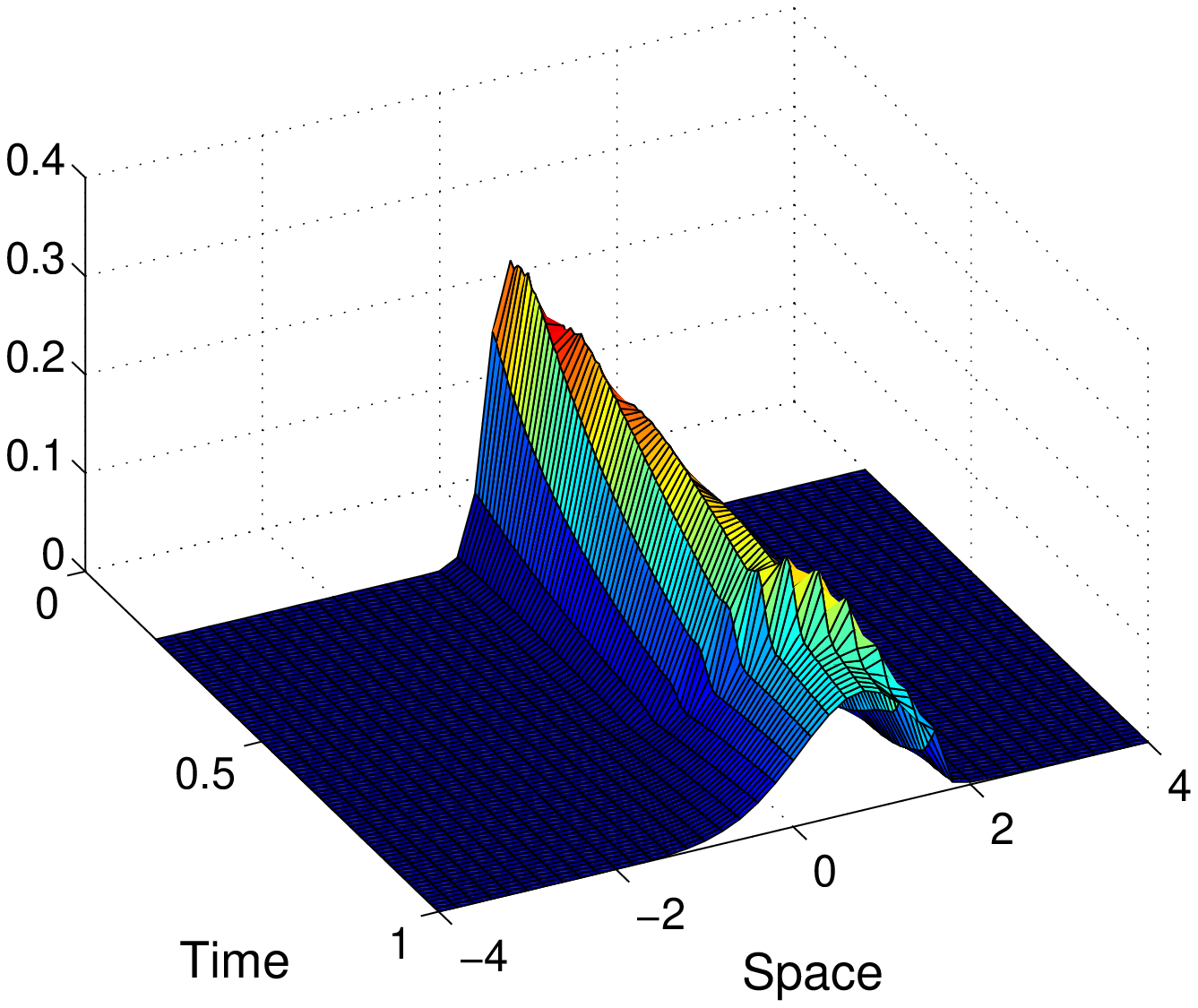} 
      \caption{\newline Regularized \newline distribution}
      \label{figCVaR1}
      \vspace{9.5mm}
      \includegraphics[width=\textwidth]{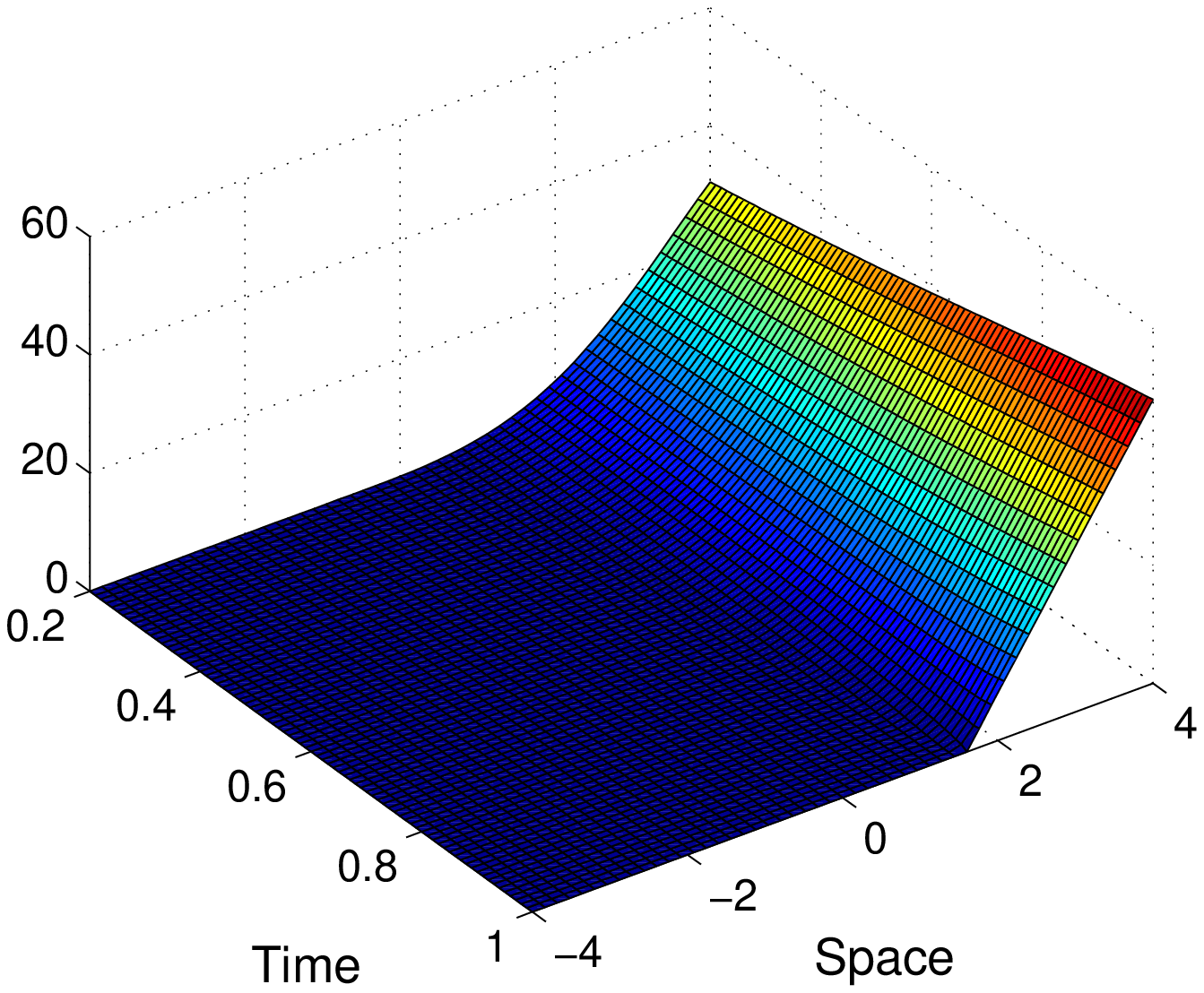}
      \caption{\newline Value function}
      \label{figCVaR2}
      \vspace{9mm}
   \end{minipage} \hfill
   \begin{minipage}[c]{.45\linewidth}
      \includegraphics[width=\textwidth]{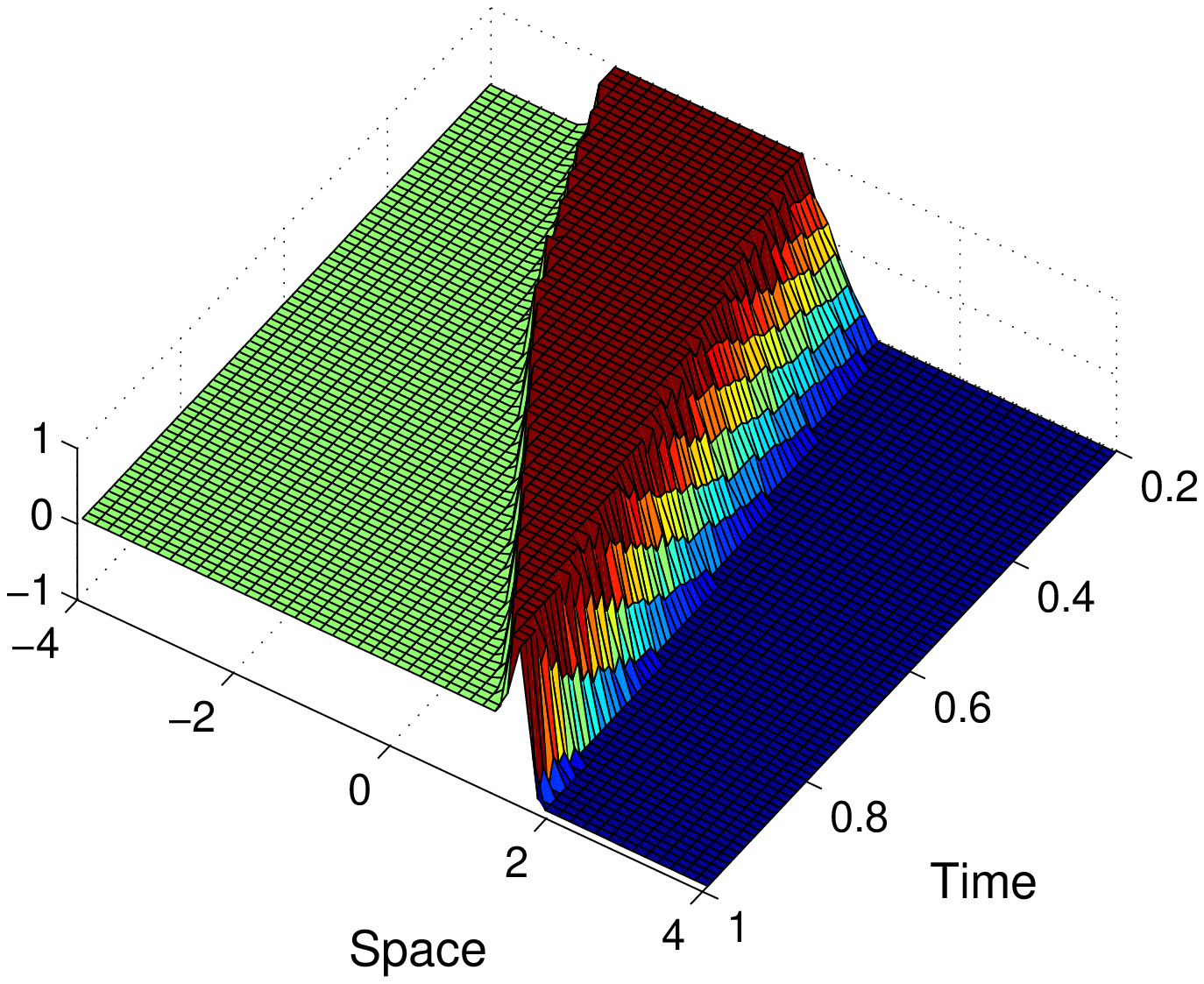} 
      \caption{\newline Optimal control}
      \label{figCVaR3}
      
      \vspace{14mm}
      

\centering
\begin{tabular}{|c||cc|} \hline
     & \multicolumn{2}{c|}{\text{Algorithm 1}} \\ \hline
$\ell$ & $\chi(m^\ell)$ & \phantom{$\big|^l$} $\varepsilon_\ell$ \phantom{$\big|^l$} \\ \hline
0  & 2,0545 & 2,5.$10^{-1}$ \\
5  & 1,8972 & 1,0.$10^{-1}$ \\
10 & 1,8051 & 9,0.$10^{-3}$ \\
15 & 1,7961 & 3,6.$10^{-5}$ \\
20 & 1,7961 & 9,6.$10^{-7}$ \\
25 & 1,7961 & 1,1.$10^{-8}$ \\
30 & 1,7961 & 1,0.$10^{-10}$ \\ \hline
\end{tabular}

\includegraphics[width= 0\textwidth]{Figures/MixConcave/probDistrib.eps}
\caption{\newline Convergence \newline results}
      \label{figCVaR4}

\vspace{2mm}

   \end{minipage}
   \end{minipage}
   }
\end{figure}

We present numerical results for four different academic problems. The considered controlled SDEs are the following:
\begin{equation*}
\begin{array}{llll}
\text{Test cases 1, 2, 3:} & \dd X_t= u_t \dd t + \dd W_t, & X_0= 0, & U=[-1,1] \\
\text{Test case 4: } & \dd X_t= u_t \dd t + (1-u_t) \dd W_t, & X_0= 0, & U= [-1,1]. 
\end{array}
\end{equation*}
The chosen time step is $\delta t= 0.01$, the state space is discretized with $S= \{ -5, -5 + \delta x, -5 + 2\delta x, ..., 5 \}$, where $\delta x= 0.01$. The set of feasible controls is also discretized, with $\{ -1, -1 + \delta u, -1 + 2 \delta u, ... , 1 \}$, where $\delta u = 0.05$ (the minimization problem in \eqref{eqDynProgForScheme} is solved by enumeration).

As we already mentioned in remark \ref{remAlgo}, the discretized probability distribution obtained with a the semi-Lagrangian scheme has an oscillatory behaviour (see figure \ref{figTransport1}). However, in the three considered examples, it is easy to build a ``regularized" probability distribution. The approach used is the following: we denote by $S'$ the following subset of $S$: $S'= \{ -5, -5+ \delta y, -5 +2 \delta y,..., 5 \}$, with $\delta y= 0.2$. Given a probability distribution $m_j$ on $S$ at time $j$, we compute a regularized distribution $\tilde{m}_j$ on $S'$ as follows:
\begin{equation} \label{eqRegularization}
\tilde{m}_j(y)= \sum_{\begin{subarray}{c} x \in S \\ |y-x| \leq \delta y \end{subarray}} \frac{\delta y - | y-x |}{\delta y} m_j(x), \quad \forall x \in S.
\end{equation}
One can easily check that $d_1(m_j,\tilde{m}_j) \leq \delta y/2$.

The graphs of the regularized probability distributions in the three studied test cases (figures \ref{figTransport2}, \ref{figMixConvexe1}, and \ref{figMixConcave1}) are in our opinion good representations.
We do not pretend to justify here the discretization used for our problem. However, we think that the (unregularized) probability distribution obtained with our numerical scheme is acceptable (despite its oscillatory behavior), in so far as the regularization provides a good representation and is close in the $d_1$-norm, the cost function being continuous for this norm.

The time needed for a backward pass is approximately $0.23$s, the time needed for a forward pass is approximately $0.05$s.

\paragraph{Test case 1: Wasserstein distance}

We test cost function: $\chi(m)= d_2(m,\bar{m})$, where:
\begin{equation*}
\bar{m}= \frac{1}{3} \big( \delta_{-2} + \delta_{0} + \delta_{2} \big).
\end{equation*}
In dimension 1, this cost can be easily computed, as well as a sub-gradient which we use as if it was a gradient.
In Figure \ref{figCaseTransportConvergence}, we show the value of the cost function at different iterations $\ell$, as well as the criterion $\varepsilon_{\ell}$, for the two algorithms. For the second algorithm, we also show the number of backward and forward passes $q$.

The figures \ref{figTransport1}, \ref{figTransport2}, \ref{figTransport3}, and \ref{figTransport4} (page \pageref{pageTestCase1}) show respectively the probability distribution, the regularized probability distribution (obtained with \eqref{eqRegularization}), the value function, and the optimal control that we obtain after a large number of iterations of the second algorithm.

Let us comment on the form of the value function and the optimal solution. The choice of the probability distribution $\bar{m}$ has the following effect: one tries to attract the system at one of the three points: $-2$, $0$, and $2$. These three points are the three local minimizers of the dual variable at the final time.
At the final time, the optimal control is bang-bang and has 5 discontinuity points. Three of them are the three attractors ($-2$, $0$, and $2$): the optimal control is equal to 1 on the left and to -1 on the right, at each of these points. The two other discontinuity points are the two local maximizers of the dual variable. Due to diffusion, the value function has only two local minimizers for early times. At these times, the optimal control has only three discontinuity points and only the points $-2$ and $2$ play a role of attractor. 

We observed that the criterion $\varepsilon_{\ell}$ does not seem to converge (even after a very large number of iterations). The cost function is probably not differentiable at the optimal solution (which happens if the dual problem has several optimal solutions). In this case, the derivative is discontinuous at the optimal solution, which prevents the criterion $\varepsilon_\ell$ from converging.
Comparing algorithm 2 with algorithm 1, we observe that algorithm 2 is particularly efficient. The difference of costs of the two methods is negligible.

\paragraph{Test cases 2 and 3: combination of expectation and standard deviation}

In the test case 2 (resp.\@ test case 3), we use the following cost function:
\begin{align*}
\chi(m)= \ & \mathbb{E} \big[ X_T \big] + \beta \sqrt{\text{Var} \big[ X_T \big] } \\
= \ & \int_{\R} x \dd m(x) + \beta \Big( \int_{\R} x^2 \dd m(x) - \Big( \int_{\R} x \dd m(x) \Big)^2 \Big)^{1/2},
\end{align*}
with $\beta= -2$ (resp.\@ $\beta= 2$). One can easily check that in the test case 2, the cost function is convex, whereas it is concave in the test case 3. The algorithm 1 is therefore sufficient for the test case 3, since then the solution of \eqref{eqStepSize} is $-1$.

In the two test cases, the two algorithms converge quickly, as shown in Figure \ref{figConvResultsTest23}. The difference of costs for the two algorithms is negligible (in the test case 2).
The probability distribution, the value function, and the optimal control are shown page \pageref{figMixConvexe1}.
As a consequence of formula \eqref{eqDiffFunctionExp}, the value function is a parabola at the final in the two cases (a concave one in the test case 2, a convex one in the test case 3). The optimal control is constant in time and has a bang-bang structure: equal to 1 when the value function is decreasing and equal to $-1$ when it is increasing.

\paragraph{Test case 4: Conditional Value at Risk}

The cost function used in this test case is the conditional value at risk, with parameter $\beta= 0,95$. The chosen controlled SDE is slightly different for this example and must be understood as follows: negative controls are efficient, in so far as they induce the strong decrease of the state variable (in expectation). They are also risky, since the volatily is higher. To the contrary, positive control are less risky, but expensive.

Results are presented in Figures \ref{figCVaR1}-\ref{figCVaR4}. The criterion is very small at the end. Note that since the cost function is concave, the first algorithm is sufficient. Let us comment on the obtained graphs. The CVaR focuses on the worst cases: when $X_t$ is high, a risky strategy is employed. To the contrary, when $X_t$ is low, which is a favorable case, the gains are not taken into account and therefore a less risky strategy is prefered.

\paragraph{Conclusion of the numerical results}

We have tested the two methods on four academic examples, for which the cost function is continuous for the Wasserstein distance. A semi-Lagrangian scheme has been used for the discretization. The two proposed methods converge. The controls provided by the second method (which only computes feedback controls) are as good as the controls of the first one (which allows a larger class of control processes). Except in the test case 1, for which the cost function is not continuously differentiable, the criterion $\varepsilon_\ell$ converges to 0. A further observation of the convergence results also shows that the cost $\chi(m^\ell)$ and the criterion $\varepsilon_\ell$ converge at a linear rate in the four cases, for the first method as well as the second one (for test cases 1 and 2).

\section*{Acknowledgements}

The author gratefully acknowledges the Austrian Science Fund (FWF) for financial support under SFB F32 ``Mathematical Optimization and Applications in Biomedical Sciences". The author thanks the three anonymous referees for their valuable comments on the original manuscript.

\bibliographystyle{plain}
\bibliography{biblio}


\end{document}